\def\Ia#1{{\stackops{\cdot}{\mathrm{I}}{-0.1ex}}\np{#1}}\fi
\def\Ib#1{{\stackops{\cdot}{\mathrm{I}}{-2.5ex}}\np{#1}}\fi
\newcommand{\stablpmu}{\Lpmu[\oplus]{1}{\barRR}}
\newcommand{\Posetorder}{\preceq}
\newcommand{\Posetname}{X}
\newcommand{\poset}{\lowercase{x}}
\newcommand{\POSET}{\espace{\Posetname}}
\newcommand{\Poset}{{\Posetname}}
\newcommand{\POSETEXT}{\np{\POSET,{\Posetorder}}}
\newcommand{\Posetorderfirst}{\preceq_{\POSET}}
\newcommand{\Posetnamesecond}{Y}
\newcommand{\Posetordersecond}{\preceq_{\espace{\Posetnamesecond}}}
\newcommand{\OuterClosureBasic}{\mathop{\uparrow}}
\newcommand{\OuterClosureLone}[1]{\raisebox{-0.4mm}{${\substack{\mathop{\uparrow} \\ \text{\tiny ${(1)}$}}}$}\,{#1}}
\else\newcommand{\Lpmu}[3][]{L_{#1}^{#2}\np{\Omega,\tribu{F},\mu;#3}}\fi
\newcommand{\winf}{\bwedge}% \mathop{\wedge}} %w(edge)inf
\newcommand{\ChoquetInt}{\int^{\mathcal{C}}}
\newcommand{\controlfunc}{\mathfrak{u}} % Fonction u:omega -> U des contrôles
\newcommand{\Controlfunc}{\mathcal{U}}
\newcommand{\appendixref}[1]{Appendix~\ref{#1}}
\title{Minimization Interchange Theorem on Posets}
\author{Jean-Philippe \textsc{Chancelier} \and Michel \textsc{De Lara} \and Beno\^{i}t \textsc{Tran}}
\author{Jean-Philippe Chancelier\thanks{CERMICS, \'{E}cole des Ponts ParisTech, France}
  \and Michel De Lara\footnotemark[1]
  \and Benoît Tran\footnotemark[1]}
\begin{document}
\maketitle

\begin{abstract}
  Interchange theorems between minimization and integration are useful
  in optimization, especially in optimal control and in stochastic optimization.
  In this article, we establish a generalized minimization interchange theorem,
  where integration is replaced by a monotone mapping between posets (partially
  ordered sets). As an application, we recover, and slightly extend, classical results from the literature,
  and we tackle the case of the Choquet integral.
  Our result provides insight on the mechanisms behind existing interchange results.
\end{abstract}

% \mdl{il y a des OCLC dans la biblio}
% \benoit{To do : repasser sur le document et ajouter les macros (directed sup, poset ...) là où il le faut.}

% post abstract

\section{Introduction}

The question of interchanging integration and minimization is an important issue
in stochastic optimization (where integration corresponds to mathematical
expectation).
Let  $\barRR = \RR \cup \na{+\infty} \cup \na{-\infty}$.
Loosely stated, given a measured space $\bp{\Omega, \mathcal{F}, \mu}$
and a subset $X \subset \barRR^{\Omega}$ of functions,
an interchange property has the form
\begin{equation}
  \label{eq:bs1}
  \inf_{x\in X} \int_{\Omega} x \dd\mu= \int_{\Omega} \inf_{x\in X} x \dd\mu
  \eqfinp
\end{equation}
In Equation~\eqref{eq:bs1}, one needs to clarify in which sense
the integral $\int$, the infima $\inf_{X\in X} x$ and
$\inf_{x\in X} \int x \dd\mu$ are defined.
Mathematical frameworks and conditions to obtain Equation~\eqref{eq:bs1} can be
found in \cite{Bo.Gi2001, El.Ta2013, Gi2009, Ka.Mc2016, Ro.We2009, Sh.De.Ru2009}.
We detail the contributions of the two references~\cite{Gi2009} and
\cite{Ro.We2009}.
We refer the reader to \appendixref{sec:outer_int} for recalls and notation
regarding extended Lebesgue and outer integrals.

When the subset~$X$, over which minimization is performed, is a subset of
$\Lpmu{1}{\RR}$ and when the integral~$\int$ is the usual Lebesgue integral,
Giner obtained in \cite{Gi2009} a necessary and sufficient condition
for~\eqref{eq:bs1} as follows.
In this case, the space~$\Lpmu{1}{\RR}$ is endowed with the usual
$\mu$-pointwise order, and the infimum is $\inf_{x\in X} x = \essinf_{x\in X}x$,
which is well-defined %for any nonempty subset$U$ of $\inf_{u\in U} u$ by, for example
by \cite[Proposition II.4.1]{Ne1970}.
Given a subset $X \subset \Lpmu{1}{\RR}$ of functions, Giner establishes that
Equation~\eqref{eq:bs1} holds true if and only if, for every finite
family $x_1, \ldots, x_n$ in $X$, we have
\begin{equation}
  \inf_{x\in X} \int_{\Omega} \np{x - \inf_{1\leq i \leq n} x_i }\dd{\mu} \leq 0
  \eqfinp
  \label{eq:Giner}
\end{equation}
However, checking the above condition is not an easy task, as it depends jointly
on the integral~$\int$ and on the subset~$X$. Moreover,
one may wonder if Equation~\eqref{eq:bs1} still holds true for more general
subsets~$X$, containing functions which are integrable in a weaker sense than Lebesgue integrable.

When a subset $X \subset \Lpmu{0}{\barRR}$ of measurable functions is the image of a set~$U$ by a
mapping $f : \Lpmu{0}{\barRR} \to \Lpmu{0}{\barRR}$, \emph{i.e.} $X = f(U)$, a
celebrated theorem of Rockafellar and Wets (\cite[Theorem 14.60]{Ro.We2009})
gives a condition on the mapping~$f$ and a condition on the set~$U$ so that Equation~\eqref{eq:bs1}
holds true.
In this case, we deal with minimization over subsets~$X$ of~$\Lpmu{0}{\RR}$ and
interchange with the outer integral, a generalization of the Lebesgue integral
to $\Lpmu{0}{\barRR}$.

Our contribution is to provide a minimization interchange theorem
where integration is replaced by a monotone mapping
$\Phi : \POSET \to \YY$
between \emph{posets} (partially ordered sets) $\POSET$ and~$\YY$.
More precisely, we provide an abstract interchange theorem of the form
\begin{equation}
  \label{eq:bs2}
  \winf_{\poset \in \Poset} \Phi\np{\poset} = \Phi\bp{\winf_{\poset\in \Poset} \poset}
  \eqfinp
\end{equation}
Several works studied the abstract interchange of Equation~\eqref{eq:bs2} with $\Phi$ not being the integral, for instance \cite{Ak.Fo2018} when $\Phi$ is an $\alpha$-subhomogeneous operator or \cite{Sh2017} when $\Phi$ is a risk measure.
Once assumed conditions on the mapping $\Phi : \POSET \to \YY$ and
structural properties of the sets~$\POSET$ and~$\YY$,
we provide a necessary and sufficient condition so that Equation~\eqref{eq:bs2} holds
true.
Our search for minimal assumptions led us to assume that the sets~$\POSET$
and~$\YY$ are equipped with partial orders, and that the subset $X\subset
\POSET$
--- on which the interchange property is to be checked ---
is included in a complete inf-semilattice to obtain the existence of greatest
lower bound.
Our result is in the lineage of Giner's condition~\eqref{eq:Giner}, as our necessary and sufficient
condition involves both the mapping~$\Phi$ and the set~$\Poset$.

The article is organized as follows.
Sect.~\ref{sec:MIT} is devoted to a minimization interchange theorem on posets.
Sect.~\ref{sec:MIT_int} tackles the question
of interchange between minimization and
different integrations (extended Lebesgue, outer and Choquet integrals), as well
as order preserving functionals,
by specifying the results of Section~\ref{sec:MIT}.
We provide background on extended Lebesgue and outer integrals in
\appendixref{sec:outer_int}.

We hope that our abstract interchange theorem --- together with its application to
different integrals and order preserving functionals ---
provides insight into how one may obtain
interchange between minimization and integration,
or even go beyond the integral case,
like with risk measures in stochastic optimization.
% Of course, although the above discussion concerns interchanging with
% minimization, one can derive results about maximization by considering the~$-$
% operation when possible.

\section{Minimization interchange theorem on posets}
\label{sec:MIT}

In \S\ref{sec:MIT-main}, we present our main result, namely
Theorem~\ref{th:Minimization_IT}, which provides an abstract interchange result in the
form of Equation~\eqref{eq:bs2} for a mapping $\Phi : \POSET \to \YY$
(generalization of the integral) between specific posets.
For this purpose, we define the notion of
$\Phi$-inf-directed subset $\Poset \subset \POSET$,
as it is instrumental to obtain necessary and
sufficient conditions for such an abstract interchange to hold true.
Then, in~\S\ref{sec:sufficient_cond}, we show how the well-known
notion of inf-directed subset is both sufficient
and easier to check for the subset $\Poset \subset \POSET$ to be
$\Phi$-inf-directed.

\subsection{Main result}
\label{sec:MIT-main}

Before stating Theorem~\ref{th:Minimization_IT}, we provide background on posets
and lattices, as well as two new definitions.
\medskip

We say that $\POSETEXT$ is a \emph{poset} when $\POSET$ is a set and
$\Posetorder$ is a partial order on~$\POSET$, that is, a reflexive, antisymmetric and
transitive binary relation.
Examples include \( \RR \) or \( \barRR \) equipped with the classic
order~$\leq$, or mappings with values in a poset and equipped with the
componentwise order.

Consider a poset~$\POSETEXT$ and a subset $\Poset \subset \POSET$.
% Any $\poset' \in \POSET$ such that, for all $\poset \in \Poset$, we have that
% $\poset \Posetorder \poset'$ is called an \emph{upper bound}
% of the set~$\Poset$.
Any $\poset' \in \POSET$ such that, for all $\poset \in \Poset$, we have that
$\poset' \Posetorder \poset$ is called a \emph{lower bound}
of the set~$\Poset$.
If a lower bound $\poset' \in \POSET$ of the set~$\Poset$ is such that
$\poset'' \Posetorder \poset'$, for any other lower bound $\poset'' \in \POSET$
of the set~$\Poset$, then  $\poset'$ is unique and is called
the \emph{greatest lower bound} of the set~$\Poset$.
In that case, it is denoted by $\inf \Poset$ %or $\vee\np{\Poset}$
or, more explicitly, by $\winf_{{\poset} \in \Poset} \poset$.
We say that a poset $\POSETEXT$ is an \emph{inf-semilattice},
if every nonempty finite subset %$\Poset$
of~$\POSET$
has a greatest lower bound.
We say that a poset $\POSETEXT$ is a \emph{complete inf-semilattice},
if every nonempty subset %$\Poset$
of~$\POSET$
has a greatest lower bound.

% We say that the poset $\np{\POSET, \Posetorderfirst}$ is a \emph{complete lattice} if every nonempty subset has both a greatest lower bound and a lowest upper bound.
% %
% We say that a subset $\Poset \subset \POSET$ has the \emph{countable inf property}
% if $\winf_{\poset \in \Poset} x$ exists  in~$\POSET$
% and if there exists a sequence $\sequence{x_n}{n\in \NN}$ in~$\Poset$ such that
% \( \winf_{n \in \NN} \poset_n = \winf_{\poset \in \Poset} x \).
% \medskip

Now, we introduce the notion of \emph{$\Phi$-inf-directed} subset.

\begin{definition}
  \label{def:phi-inf-directed}
  Let $\np{\POSET,\Posetorderfirst}$ be an inf-semilattice and
  $\np{\YY,\Posetordersecond}$ be a complete inf-semilattice
  and $\Phi: \POSET \to \YY$ be a mapping.
  Let $\Poset \subset \POSET$ be a subset of~$\POSET$.
  We say that the subset \emph{$\Poset$ is $\Phi$-inf-directed} if,
  for every finite subset $\tilde{\Poset} \subset \Poset$, we have that
  \begin{equation}
    \label{eq:Minimization_IT-phidirected}
    \winf_{x\in \Poset} \Phi(x) \Posetordersecond \Phi\np{\winf_{x\in  \tilde{\Poset}}x}
    \eqfinp
  \end{equation}
\end{definition}

With this definition, we can now state our main theorem.

\begin{theorem}[Minimization Interchange Theorem]
  \label{th:Minimization_IT}
  Let $\np{\POSET,\Posetorderfirst}$ be a poset and
  $\np{\YY,\Posetordersecond}$ be a complete inf-semilattice.
  Let $\Poset \subset \POSET$ be a subset of~$\POSET$,
  \( \widetilde\POSET \subset \POSET \) be an inf-semilattice such that
  \( \Poset \subset \widetilde\POSET \subset \POSET \),
  and $\Phi : \POSET \to  \YY$ be a mapping
  such that
  \begin{enumerate}[label=$(T_{\arabic*})$, ref=$T_{\arabic*}$]
    % \begin{enumerate}
  \item
    \label{it:Minimization_IT-op}
    the mapping~$\Phi$ is order preserving, \emph{i.e.} for every $x,x'\in \POSET$,
    \begin{equation}
      x \Posetorderfirst x' \Rightarrow \Phi(x) \Posetordersecond \Phi(x')
      \eqfinv
    \end{equation}
  \item
    \label{it:Minimization_IT-liminf-one-seq}
    The element $\winf_{x\in \Poset} x$ exists in the poset~$\POSET$
    and there exists a sequence $\sequence{x_n}{n\in \NN}$ in $\Poset$ such that
    \begin{enumerate}
    \item
      $\winf_{n\in \NN} \poset_n$ exists in $\POSET$ and
      \begin{equation}
        \winf_{n \in \NN} x_n = \winf_{x\in \Poset} x
        \eqfinv
        \label{eq:Minimization_IT-liminf-one-seq}
      \end{equation}
    \item
      the nonincreasing sequence $\sequence{x'_n}{n\in \NN}$ in $\widetilde\POSET$ defined by
      $x'_n = \winf_{k\leq n} x_k$, for all $n\in \NN$,
      satisfies the following inequality
      \begin{equation}
        \label{eq:FatouLikeAssumptionForOnesequence_inf}
        \winf_{n\in \NN} \Phi\np{x'_n }
        \Posetordersecond \Phi\np{\winf_{n\in\NN} x_n}=
        \Phi\np{\winf_{x\in \Poset} x}
        % \Phi\np{\winf_{n\in \NN} x'_n}
        \eqfinp
      \end{equation}
    \end{enumerate}
  \end{enumerate}
  Then, we have the \emph{interchange formula}
  \begin{equation}
    \label{eq:interchange_formula_inf}
    \winf_{x \in \Poset} \Phi(x) = \Phi\bp{\winf_{x\in \Poset} x}
  \end{equation}
  if and only if the subset~$\Poset$ is $\Phi$-inf-directed
  (as in  Definition~\ref{def:phi-inf-directed}).
\end{theorem}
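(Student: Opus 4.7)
The plan is to handle the two directions of the biconditional separately, with the substantive work lying in the sufficiency direction. For \emph{necessity}, assume the interchange formula~\eqref{eq:interchange_formula_inf} and fix any finite subset $\tilde\Poset \subset \Poset$. I would observe that $\winf_{x \in \Poset} x$ (which exists in $\POSET$ by $(T_2)$(a)) is in particular a lower bound of $\tilde\Poset$, so $\winf_{x \in \Poset} x \Posetorderfirst \winf_{x \in \tilde\Poset} x$ (the latter infimum existing in $\widetilde\POSET$ since it is an inf-semilattice). Applying order-preservation $(T_1)$ followed by the interchange formula then yields
\[
\winf_{x \in \Poset} \Phi(x) = \Phi\np{\winf_{x \in \Poset} x} \Posetordersecond \Phi\np{\winf_{x \in \tilde\Poset} x}
\eqfinv
\]
which is precisely the $\Phi$-inf-directedness of~$\Poset$.

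For \emph{sufficiency}, assume that $\Poset$ is $\Phi$-inf-directed and establish~\eqref{eq:interchange_formula_inf} by two opposite inequalities. The easy one, $\Phi\np{\winf_{x \in \Poset} x} \Posetordersecond \winf_{x \in \Poset} \Phi(x)$, follows directly from $(T_1)$: for each $x \in \Poset$ we have $\winf_{x' \in \Poset} x' \Posetorderfirst x$, hence $\Phi\np{\winf_{x' \in \Poset} x'} \Posetordersecond \Phi(x)$, and the conclusion follows upon taking the infimum over $x$ in the complete inf-semilattice $\YY$. For the reverse inequality I plan to combine the sequence $(x_n)_{n \in \NN}$ from $(T_2)$ with $\Phi$-inf-directedness: for each $n$, the set $\{x_1,\ldots,x_n\}$ is a finite subset of $\Poset$, so $\Phi$-inf-directedness gives $\winf_{x \in \Poset} \Phi(x) \Posetordersecond \Phi(x'_n)$, where $x'_n = \winf_{k \le n} x_k \in \widetilde\POSET$. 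Therefore $\winf_{x \in \Poset} \Phi(x)$ is a lower bound of $\{\Phi(x'_n)\}_{n \in \NN}$, and chaining with the Fatou-type inequality~\eqref{eq:FatouLikeAssumptionForOnesequence_inf} of $(T_2)$(b) produces
\[
\winf_{x \in \Poset} \Phi(x) \Posetordersecond \winf_{n \in \NN} \Phi(x'_n) \Posetordersecond \Phi\np{\winf_{x \in \Poset} x}
\eqfinp
\]

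There is no genuine technical obstacle: the argument is essentially a diagram chase that combines the set-indexed-to-finite bridge provided by $\Phi$-inf-directedness with the sequential passage to the limit provided by $(T_2)$(b). The only point that deserves care is tracking in which poset each infimum lives and, in particular, ensuring that the partial infima $x'_n$ remain in the domain of $\Phi$---which is precisely the role of the auxiliary inf-semilattice $\widetilde\POSET$ in the theorem statement.
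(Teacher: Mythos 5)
Your proposal is correct and follows essentially the same route as the paper's proof: the easy inequality from order preservation plus completeness of $\YY$, the reverse inequality by applying $\Phi$-inf-directedness to the finite sets $\nset{x_k}{k\leq n}$ to bound $\Phi(x'_n)$ from below and then chaining with~\eqref{eq:FatouLikeAssumptionForOnesequence_inf}, and the converse by combining the interchange formula with monotonicity of~$\Phi$. Your closing remark about the role of $\widetilde\POSET$ in keeping the partial infima $x'_n$ in the domain where the hypotheses apply matches the paper's own emphasis.
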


\begin{proof}
  Let $\Phi : \POSET \to \YY$ and $\Poset \subset \POSET$ be given satisfying
  the two assumptions~\eqref{it:Minimization_IT-op} %, \eqref{it:Minimization_IT-minseq},
  and \eqref{it:Minimization_IT-liminf-one-seq}.
  \medskip

  \noindent $\bullet$
  We assume that the subset~$\Poset$ is $\Phi$-inf-directed
  and we prove the interchange formula~\eqref{eq:interchange_formula_inf} by means of
  two inequalities.

  First, using the fact that the mapping~$\Phi$ is order preserving, we have that
  \begin{equation*}
    \Phi\bp{ \mathop{\wedge}_{x\in \Poset} x} \Posetordersecond \Phi\np{ x'}
    \eqsepv \forall x' \in \Poset
    \eqfinv
  \end{equation*}
  where \( \mathop{\wedge}_{x\in \Poset} x \) is well-defined as an element of~$\POSET$ by Assumption~\eqref{it:Minimization_IT-liminf-one-seq}
  Thus, by the assumption that $\np{\YY,\Posetordersecond}$ is a complete inf-semilattice,
  % the greatest lower bound property of~$\YY$,
  we obtain that
  \[
    \Phi\bp{\mathop{\wedge}_{x\in \Poset} x} \Posetordersecond \mathop{\wedge}_{x\in \Poset} \Phi(x)
    \eqfinp
  \]
  Second, we prove the reverse inequality $\mathop{\wedge}_{x\in \Poset} \Phi(x) \Posetordersecond \Phi\bp{\mathop{\wedge}_{x\in \Poset} x}$.
  Using Assumption~\eqref{it:Minimization_IT-liminf-one-seq}, there exists a sequence
  $\sequence{x_n}{n\in \NN}$, whose terms are in~$\Poset$ (hence in the inf-semilattice~\( \widetilde\POSET \)),
  and such that $\winf_{n\in \NN} x_n =\winf_{x\in \Poset} x$ by~\eqref{eq:Minimization_IT-liminf-one-seq}.
  Now, we define a new sequence $\sequence{x'_n}{n\in \NN}$
  by $x'_n = \winf_{k\leq n} x_k$, for all $n\in \NN$.
  So defined, $x'_n$ does not necessarily belong to the subset $\Poset$,
  but belongs to the inf-semilattice~$\widetilde\POSET$ which contains~$\Poset$.
  % Notice that
  % \begin{equation}
  %   \label{eq:Minimization_IT-0}
  %   \winf_{x\in \Poset} x \Posetorderfirst x'_n \Posetorderfirst x_n
  %   \eqsepv \forall n\in \NN
  % \end{equation}
  % as readily follows from $\winf_{n\in \NN} x_n =\winf_{x\in \Poset} x$
  % and from $x'_n = \winf_{k\leq n} x_k$.
  % We deduce that $\winf_{n\in \NN} x'_n =\winf_{x\in \Poset} x$.
  % Indeed, $\winf_{x\in \Poset} x$ is a minorant of the set \( \nset{x'_n}{n\in\NN} \)
  % and, if there were a strictly greater minorant, it would be,
  % by~\eqref{eq:Minimization_IT-0},
  % a strictly greater minorant also for the set \( \nset{x_n}{n\in\NN} \),
  % which is not the case as $\winf_{n\in \NN} x_n =\winf_{x\in \Poset} x$
  % by assumption.
  % %
  % As a consequence, the sequence $\sequence{x'_n}{n\in \NN}$
  % (whose terms do not necessarily belong to the subset~$\Poset$,
  % whereas those of the sequence $\sequence{x_n}{n\in \NN}$ do)
  % is nonincreasing and satisfies the equalities
  % \begin{equation}
  %   \label{eq:Minimization_IT-1}
  %   \winf_{n\in \NN} x'_n = \winf_{n\in \NN} x_n = \winf_{x \in \Poset} x
  %   \eqfinp
  % \end{equation}
  Then, we get
  \begin{align*}
    \winf_{x\in \Poset} \Phi\np{x}
    & \Posetordersecond \Phi \np{\winf_{k\leq n} x_k}
      \intertext{by~\eqref{eq:Minimization_IT-phidirected} as the subset~$\Poset$ is $\Phi$-inf-directed, by assumption,
      and as the set $\nset{x_k}{k\leq n}\subset \Poset$ is finite}
    & = %\Posetordersecond
      \Phi \np{x'_n}
      \intertext{by definition of $x'_n=\winf_{k\leq n} x_k$, so that we deduce }
      \winf_{x\in \Poset} \Phi\np{x} &
                                       \Posetordersecond \winf_{n\in \NN} \Phi \np{x'_n}
                                       \tag{as $\np{\YY,\Posetordersecond}$ is a complete inf-semilattice by assumption}
    \\
    & \Posetordersecond  \Phi\np{\winf_{n\in \NN} x_n}=  \Phi\bp{\winf_{x\in \Poset} x}
  \end{align*}
  by~\eqref{eq:FatouLikeAssumptionForOnesequence_inf}
  Assumption~\eqref{it:Minimization_IT-liminf-one-seq} is satisfied.
  \medskip

  \noindent $\bullet$
  Conversely, we assume that the interchange formula~\eqref{eq:interchange_formula_inf}
  holds true for the subset $\Poset \subset \POSET$, and we show that $\Poset$ is
  $\Phi$-inf-directed.

  For this purpose, we consider a finite subset $\tilde{\Poset} \subset \Poset$,
  and we get
  \begin{align*}
    \winf_{x\in \Poset} \Phi\np{x}
    & =
      \Phi \np{\winf_{x\in \Poset} x}
      \tag{by the interchange formula~\eqref{eq:interchange_formula_inf}}
    \\
    & \Posetordersecond
      \Phi \bp{\winf_{x\in \tilde{\Poset}} x}
  \end{align*}
  since the mapping~$\Phi$ is order preserving and
  $\winf_{x\in \Poset} x \Posetorderfirst \winf_{x\in \tilde{\Poset}} x$.
  \medskip

  This concludes the proof.
\end{proof}

To state a corollary, we introduce the notion of
\emph{sequentially-inf continuity} of a mapping. The name
is suggested by the notion of sequentially order continuity (denoted as
``continuit\'{e} monotone s\'{e}quentielle'' in~\cite[p. 37]{Ne1970}).

\begin{definition}
  \label{def:sequentially-inf-continuous}
  Let $\np{\POSET,\Posetorderfirst}$ be an inf-semilattice and $\np{\YY,\Posetordersecond}$ be a complete inf-semilattice, $\Phi: \POSET \to \YY$ be a mapping and $\underline{x}$ be a given point of $\POSET$.
  We say that the mapping~$\Phi$ is \emph{sequentially-inf continuous at
    $\underline{x}$} when the following property holds true:
  for any nonincreasing sequence $\sequence{x_n}{n\in \NN}$ in~$\POSET$
  such that $\winf_{n\in \NN} x_n $ exists (in~$\POSET$)
  and such that $\winf_{n\in \NN} x_n = \underline{x}$, we have that
  \begin{equation}
    \label{eq:FatouLikeAssumption_inf}
    \winf_{n\in \NN} \Phi\np{x_n } \Posetordersecond  \Phi\np{\winf_{n\in \NN} x_n}=\Phi\np{\underline{x}}
    \eqfinp
  \end{equation}
  Moreover, we say that the mapping~$\Phi$ is \emph{sequentially-inf continuous on the inf-semilattice $\POSET$} if it is sequentially-inf continuous at every $x\in \POSET$.
\end{definition}

This definition is demanding as the inequality
in~\eqref{eq:FatouLikeAssumption_inf} is the reverse of the inequality
obtained when the mapping~$\Phi$ is nondecreasing,
or the inequality given by the Fatou Lemma  when the mapping~$\Phi$ is the
Lebesgue integral,
or the inequality required if the mapping~$\Phi$ is lower semi continuous.
  
\begin{corollary}[Minimization Interchange Corollary]
  \label{cor:Minimization_IT}
  Let $\np{\POSET,\Posetorderfirst}$ be a poset and
  $\np{\YY,\Posetordersecond}$ be a complete inf-semilattice.
  Let $\Poset \subset \POSET$ be a subset of~$\POSET$,
  \( \widetilde\POSET \subset \POSET \) be an inf-semilattice such that
  \( \Poset \subset \widetilde\POSET \subset \POSET \),
  and $\Phi : \POSET \to  \YY$ be a mapping
  such that
  \begin{enumerate}[label=$(C_{\arabic*})$, ref=$C_{\arabic*}$]
    % \begin{enumerate}
  \item
    \label{it:Minimization_IT-op_cor}
    the mapping~$\Phi$ is order preserving, \emph{i.e.} for every $x,x'\in \POSET$,
    \begin{equation}
      x \Posetorderfirst x' \Rightarrow \Phi(x) \Posetordersecond \Phi(x')
      \eqfinv
    \end{equation}
  \item
    \label{it:Minimization_IT-minseq}
    the subset~$\Poset$ has the countable inf property,
    \emph{i.e.} $\winf_{x\in \Poset} x$ exists in the poset~$\POSET$
    and there exists a sequence $\sequence{x_n}{n\in \NN}$ in $\Poset$ such that $\winf_{n\in \NN} \poset_n$ exists in $\POSET$ and
    \begin{equation}
      \winf_{n \in \NN} x_n = \winf_{x\in \Poset} x
      \eqfinv
    \end{equation}

  \item
    \label{it:Minimization_IT-liminf-at-x}
    the mapping~$\Phi$ is {sequentially-inf continuous} at $\underline{x} = \winf_{x\in \Poset} x$,
    when restricted to the inf-semilattice~$\widetilde\POSET$
    (see Definition~\ref{def:sequentially-inf-continuous}).
  \end{enumerate}
  Then, we have the \emph{interchange formula}
  \begin{equation}
    \label{eq:interchange_formula_inf_cor}
    \winf_{x \in \Poset} \Phi(x) = \Phi\bp{\winf_{x\in \Poset} x}
  \end{equation}
  if and only if the subset~$\Poset$ is $\Phi$-inf-directed
  (as in  Definition~\ref{def:phi-inf-directed}).
\end{corollary}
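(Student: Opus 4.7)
The plan is to reduce the corollary to Theorem~\ref{th:Minimization_IT}, by showing that assumptions~\eqref{it:Minimization_IT-op_cor}, \eqref{it:Minimization_IT-minseq} and \eqref{it:Minimization_IT-liminf-at-x} together imply the two assumptions~\eqref{it:Minimization_IT-op} and \eqref{it:Minimization_IT-liminf-one-seq} of the theorem. Once this implication is established, the equivalence between the interchange formula~\eqref{eq:interchange_formula_inf_cor} and $\Phi$-inf-directedness of~$\Poset$ follows directly from Theorem~\ref{th:Minimization_IT}.

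The first implication is immediate, as \eqref{it:Minimization_IT-op_cor} is literally \eqref{it:Minimization_IT-op}. For the second, I would start from the sequence $\sequence{x_n}{n\in \NN}$ in~$\Poset$ supplied by the countable inf property~\eqref{it:Minimization_IT-minseq}, which satisfies $\winf_{n\in \NN} x_n = \winf_{x\in \Poset} x$. Since $\widetilde{\POSET}$ is an inf-semilattice containing~$\Poset$, I would then define the nonincreasing envelope $x'_n = \winf_{k\le n} x_k$ in~$\widetilde{\POSET}$; this is precisely the auxiliary sequence appearing in item~(b) of~\eqref{it:Minimization_IT-liminf-one-seq}.

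The key intermediate step is to verify that $\winf_{n\in \NN} x'_n$ exists in~$\POSET$ and coincides with $\underline{x} = \winf_{x\in \Poset} x$. One direction uses that~$\underline{x}$ is a lower bound of~$\Poset$, hence a lower bound of every finite subfamily $\nset{x_k}{k\le n}$, giving $\underline{x} \Posetorderfirst x'_n$ for every $n\in \NN$. The other direction uses that any lower bound of $\sequence{x'_n}{n\in \NN}$ is in particular a lower bound of $\sequence{x_n}{n\in \NN}$, hence dominated by $\winf_{n\in \NN} x_n = \underline{x}$ thanks to~\eqref{it:Minimization_IT-minseq}. With this identification secured, I would apply sequentially-inf continuity of $\Phi$ restricted to~$\widetilde{\POSET}$ at~$\underline{x}$, namely assumption~\eqref{it:Minimization_IT-liminf-at-x}, to the nonincreasing sequence $\sequence{x'_n}{n\in \NN}$ in~$\widetilde{\POSET}$ satisfying $\winf_{n\in \NN} x'_n = \underline{x}$; this yields exactly the Fatou-type inequality~\eqref{eq:FatouLikeAssumptionForOnesequence_inf}, so that assumption~\eqref{it:Minimization_IT-liminf-one-seq} of the theorem is satisfied.

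The main conceptual point, rather than a real technical obstacle, is to recognize that assumption~\eqref{it:Minimization_IT-liminf-one-seq} of the theorem naturally decomposes into an order-theoretic part (existence of a countable minimizing sequence in~$\Poset$) and a continuity part (a Fatou-type inequality along a nonincreasing sequence), and that passing from the given sequence $\sequence{x_n}{n\in \NN}$ to its monotone envelope $\sequence{x'_n}{n\in \NN}$ is the bridge that makes the sequentially-inf continuity hypothesis at the single point~$\underline{x}$ directly applicable.
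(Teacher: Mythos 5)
Your proposal is correct and follows essentially the same route as the paper's proof: reduce to Theorem~\ref{th:Minimization_IT}, note that \eqref{it:Minimization_IT-op_cor} and \eqref{it:Minimization_IT-op} coincide, pass to the monotone envelope $x'_n = \winf_{k\leq n} x_k$ in~$\widetilde\POSET$, identify $\winf_{n\in\NN} x'_n$ with $\winf_{x\in\Poset} x$ by the same two-sided lower-bound argument, and then invoke sequentially-inf continuity at $\underline{x}$ to obtain~\eqref{eq:FatouLikeAssumptionForOnesequence_inf}. No gaps.
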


\begin{proof}

  % Before turning to the proof, it is worth noting that
  % Assumption~\eqref{it:Minimization_IT-liminf-one-seq} can be replaced by one of the following
  % stronger assumptions:
  % \begin{enumerate}[label=$(A'_{\arabic*})$, ref=$A'_{\arabic*}$,start=3,topsep=\parsep]
  % \item \label{it:Minimization_IT-liminf-at-x}
  %   The mapping~$\Phi$ is {sequentially-inf continuous} at $\underline{x} = \winf_{x\in \Poset} x$,
  %   when restricted to the inf-semilattice~$\widetilde\POSET$
  %   (see Definition~\ref{def:sequentially-inf-continuous}),
  % \end{enumerate}
  % and
  % \begin{enumerate}[label=$(A''_{\arabic*})$, ref=$A''_{\arabic*}$,start=3,topsep=\parsep]
  % \item \label{it:Minimization_IT-liminf-global}
  %   The mapping~$\Phi$ is {sequentially-inf continuous}
  %   when restricted to the inf-semilattice~$\widetilde\POSET$
  %   (see Definition~\ref{def:sequentially-inf-continuous}),
  % \end{enumerate}
  % as we easily establish the implications \eqref{it:Minimization_IT-liminf-global}$\implies$\eqref{it:Minimization_IT-liminf-at-x}
  % and \eqref{it:Minimization_IT-minseq}+\eqref{it:Minimization_IT-liminf-at-x}
  % $\implies$\eqref{it:Minimization_IT-liminf-one-seq}.
  %

  Let $\Phi : \POSET \to \YY$ and $\Poset \subset \POSET$ be given satisfying
  the three assumptions~\eqref{it:Minimization_IT-op_cor}, \eqref{it:Minimization_IT-minseq},
  and~\eqref{it:Minimization_IT-liminf-at-x} of
  Corollary~\ref{cor:Minimization_IT}.
  We are going to show that the two assumptions~\eqref{it:Minimization_IT-op}
  and \eqref{it:Minimization_IT-liminf-one-seq} of
  Theorem~\ref{th:Minimization_IT} are satisfied.
  There is nothing to show for assumptions~\eqref{it:Minimization_IT-op_cor}
  and~\eqref{it:Minimization_IT-op} that coincide.
  \medskip

  Then, as \eqref{it:Minimization_IT-minseq}
  and \eqref{it:Minimization_IT-liminf-at-x} hold true,
  from the sequence $\sequence{x_n}{n\in \NN}$ given
  by~\eqref{it:Minimization_IT-minseq}, we build the nonincreasing sequence
  $\sequence{x'_n}{n\in \NN}$ given by $x'_n = \winf_{k\leq n} x_k$,
  for all $n\in \NN$.
  As $\winf_{n\in \NN} x_n =\winf_{x\in \Poset} x$
  by~\eqref{it:Minimization_IT-minseq},
  it readily follows that
  \( \winf_{x\in \Poset} x \Posetorderfirst x'_n \Posetorderfirst x_n\),
  for all $n\in \NN$.
  % \begin{equation}
  %   \label{eq:Minimization_IT-0}
  %   \winf_{x\in \Poset} x \Posetorderfirst x'_n \Posetorderfirst x_n
  %   \eqsepv \forall n\in \NN
  % \end{equation}
  % as readily follows from $\winf_{n\in \NN} x_n =\winf_{x\in \Poset} x$
  % and from $x'_n = \winf_{k\leq n} x_k$.
  We deduce that $\winf_{n\in \NN} x'_n =\winf_{x\in \Poset} x$.
  Indeed, $\winf_{x\in \Poset} x$ is a minorant of the set \( \nset{x'_n}{n\in\NN} \)
  and, if there were a strictly greater minorant, it would be
  % ,  by~\eqref{eq:Minimization_IT-0},
  a strictly greater minorant also for the set \( \nset{x_n}{n\in\NN} \),
  which is not the case as $\winf_{n\in \NN} x_n =\winf_{x\in \Poset} x$
  by assumption.
  As a consequence, the sequence $\sequence{x'_n}{n\in \NN}$
  (whose terms do not necessarily belong to the subset~$\Poset$,
  whereas those of the sequence $\sequence{x_n}{n\in \NN}$ do)
  is nonincreasing and satisfies the equalities
  \( \winf_{n\in \NN} x'_n = \winf_{n\in \NN} x_n = \winf_{x \in \Poset} x \).
  Thus, we have shown~\eqref{eq:Minimization_IT-liminf-one-seq},
  which represents half of Assumption~\eqref{it:Minimization_IT-liminf-one-seq} of
  Theorem~\ref{th:Minimization_IT}.

  To prove~\eqref{eq:FatouLikeAssumptionForOnesequence_inf},
  the second half of Assumption~\eqref{it:Minimization_IT-liminf-one-seq} of
  Theorem~\ref{th:Minimization_IT}, we simply use
  Definition~\ref{def:sequentially-inf-continuous}.
  Indeed, Equation~\eqref{eq:FatouLikeAssumption_inf} with the nonincreasing
  sequence $\sequence{x'_n}{n\in \NN}$, which satisfies \( \winf_{n\in \NN} x'_n = \winf_{x \in \Poset} x =
  \underline{x} \),
  gives
  \(       \winf_{n\in \NN} \Phi\np{x'_n }
  \Posetordersecond \Phi\np{\winf_{n\in\NN} x'_n} \),
  from which we readily get~\eqref{eq:FatouLikeAssumptionForOnesequence_inf} as we have shown
  that \( \winf_{n\in \NN} x'_n = \winf_{n\in \NN} x_n = \winf_{x \in \Poset} x
  \).
  \medskip

  This ends the proof.
\end{proof}

\subsection{A sufficient condition for $\Phi$-directed sets}
\label{sec:sufficient_cond}

Given an order preserving and sequentially-inf-continuous mapping $\Phi : \POSET
\to \YY$, where the posets $\POSET$, $\YY$ have sufficient structure, the
Minimization Interchange Theorem~\ref{th:Minimization_IT} shows that a subset $X
\in \POSET$ is $\Phi$-inf-directed if, and only if, we have the abstract
interchange formula
\(  \winf_{\poset \in \Poset} \Phi\np{\poset} =
\Phi\bp{\winf_{\poset \in \Poset} x} \).
However, as made apparent in its name, checking if a subset~$X$ is $\Phi$-inf-directed is
a condition that involves both~$X$ and its image by the mapping~$\Phi$. We give a simple sufficient condition on the subset
$X$ only which ensures that $X$ is $\Phi$-inf-directed
for any order preserving mapping~$\Phi$.

Let $\POSETEXT$ be a poset.
An \emph{inf-directed}\footnote{It is also called a \emph{filtered set}
  \cite{Gi.Ho.Ke.La.Mi.Sc2003}.}
set $\Poset \subset \POSET$ is a nonempty set
with the property that, for every $\poset$, $\poset' \in \Poset$, there exists $x''\in \Poset$ such that
$\poset'' \Posetorder \poset$ and $\poset'' \Posetorder \poset'$.

We now prove in Lemma~\ref{lemma:DirectedImpliesIntegrably_Directed} that any
inf-directed subset $\Poset \subset \POSET$ is
$\Phi$-inf-directed for any order preserving mapping~$\Phi$.

\begin{lemma}[Inf-directed implies $\Phi$-inf-directed]
  \label{lemma:DirectedImpliesIntegrably_Directed}
  Let $\np{\POSET,\Posetorderfirst}$ be an inf-semi\-la\-tti\-ce,
  \( \Poset \subset \POSET \) be a subset, and
  $\np{\YY,\Posetordersecond}$ be a complete inf-semilattice.
  If the subset~$\Poset$ is inf-directed then $\Poset$ is $\Phi$-inf-directed
  for any order preserving mapping $\Phi: (\POSET,\Posetorderfirst) \to (\YY,\Posetordersecond)$.
\end{lemma}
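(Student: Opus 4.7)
The plan is to fix an arbitrary finite subset $\tilde{\Poset} = \nset{x_1, \ldots, x_n}{} \subset \Poset$ and to produce, using the inf-directedness of~$\Poset$, a single element $x^\star \in \Poset$ that is a common lower bound of all the $x_i$'s. Then the order preservation of~$\Phi$ and the completeness of~$\YY$ will yield the desired inequality~\eqref{eq:Minimization_IT-phidirected} in three easy steps.

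First I would verify that a common lower bound inside~$\Poset$ exists. Inf-directedness gives this directly for two points; for arbitrary $n$, I would proceed by induction on~$n$. Namely, assuming there exists $y \in \Poset$ with $y \Posetorderfirst x_i$ for every $i \leq n-1$, I apply inf-directedness to the pair $\nset{y,x_n}{}$ to obtain $x^\star \in \Poset$ with $x^\star \Posetorderfirst y$ and $x^\star \Posetorderfirst x_n$; by transitivity $x^\star \Posetorderfirst x_i$ for every $i \leq n$.

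Second, since $\np{\POSET,\Posetorderfirst}$ is an inf-semilattice, the greatest lower bound $\winf_{x \in \tilde{\Poset}} x$ exists in~$\POSET$. As $x^\star$ is a lower bound of~$\tilde{\Poset}$ in~$\POSET$, the defining property of the greatest lower bound yields
\[
  x^\star \Posetorderfirst \winf_{x\in \tilde{\Poset}} x
  \eqfinp
\]
Applying the order preserving map~$\Phi$ (Assumption~\eqref{it:Minimization_IT-op}) gives
\(\Phi(x^\star) \Posetordersecond \Phi\bp{\winf_{x\in \tilde{\Poset}} x}\).

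Third, since $\np{\YY,\Posetordersecond}$ is a complete inf-semilattice, $\winf_{x \in \Poset} \Phi(x)$ is a well-defined element of~$\YY$, and as $x^\star \in \Poset$ it satisfies $\winf_{x \in \Poset} \Phi(x) \Posetordersecond \Phi(x^\star)$. Combining with the previous inequality by transitivity yields
\[
  \winf_{x \in \Poset} \Phi(x) \Posetordersecond \Phi\bp{\winf_{x\in \tilde{\Poset}} x}
  \eqfinv
\]
which is exactly~\eqref{eq:Minimization_IT-phidirected}, proving that $\Poset$ is $\Phi$-inf-directed. There is no real obstacle here: the only subtlety is the induction on~$n$ in the first step, which upgrades the two-element directedness condition to the finite-subset version needed by Definition~\ref{def:phi-inf-directed}; everything else is a routine chain of inequalities using order preservation and the semilattice structure.
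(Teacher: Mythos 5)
Your proof is correct and follows essentially the same route as the paper's: repeated (inductive) application of the two-element inf-directedness to produce a common lower bound $x^\star\in\Poset$ of the finite subset, then the chain $\winf_{x\in\Poset}\Phi(x)\Posetordersecond\Phi(x^\star)\Posetordersecond\Phi\bp{\winf_{x\in\tilde{\Poset}}x}$ via order preservation. You merely make the induction explicit where the paper says ``by repeated application''.
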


\begin{proof}
  Suppose that \( \Poset \subset \POSET \) is an inf-directed subset of
  $\POSETEXT$, and let $\Phi: (\POSET,\Posetorderfirst) \to
  (\YY,\Posetordersecond)$ be an order preserving mapping.
  We prove that the subset $X$ is $\Phi$-inf-directed.

  For this purpose, we consider a finite subset
  $\tilde{\Poset} \subset \Poset$. Then, by repeated application of the inf-directed
  property to the finite number of elements in the subset~$\tilde{\Poset}$,
  we get that there exists $\tilde{x}\in X$ such that
  $\tilde{x}\Posetorderfirst \wedge_{\poset \in \tilde{X}} \poset$. We therefore
  obtain that
  \begin{align}
    \bwedge_{\poset\in \Poset} \Phi\np{\poset}
    & \Posetordersecond \Phi\np{\tilde{x}}
      \tag{as $\tilde{x} \in \Poset$}
    \\
    & \Posetordersecond \Phi\bp{\wedge_{\poset \in \tilde{X}} \poset}
      \tag{as $\Phi$ is order preserving and $\tilde{x}\Posetorderfirst \wedge_{\poset \in \tilde{X}} \poset$}
      \eqfinv
  \end{align}
  which ensures that $X$ is $\Phi$-inf-directed and concludes the proof.
\end{proof}

The converse is false, \emph{i.e.} $\Phi$-inf-directed subsets are not necessarily inf-directed subsets as detailed now in
Example~\ref{converse-false}.

\begin{example}[The converse of Lemma~\ref{lemma:DirectedImpliesIntegrably_Directed} is false]
  \label{converse-false}
  Consider $\Omega = \RR$ equipped with its Borel $\sigma$-algebra $\mathcal{B}\np{\RR}$
  and Lebesgue measure $\lambda$. Define the inf-semilattice
  $\POSET = \Lpmu[\oplus]{1}{\barRR}$ (the set of measurable functions with
  Lebesgue integrable positive part, see \appendixref{sec:outer_int})
  with the $\mu$-pointwise order and the
  mapping $\Phi : \POSET \to \barRR$ being the (extended) Lebesgue integral.
  We claim that the subset  \( \Poset \subset \POSET \),  defined by
  \( \Poset = \bp{-n\mathbf{1}_{(n,n+1)},\; n\in \NN}  \subset
  \Lpmu[\oplus]{1}{\barRR} \),
  is $\Phi$-inf-directed but not inf-directed.

  First, we calculate $\wedge_{\poset \in \Poset} \Phi\np{\poset} = \wedge_{\poset \in \Poset}\int_{\RR}
  \poset\np{y} \; \lambda(\mathrm{d}y) = \wedge_{n\in \NN} (-n) = -\infty$.
  Second, for every finite subset
  $\tilde{\Poset} = \left\{ \poset_{n_1}, \ldots, \poset_{n_k} \right\} \subset
  \Poset$ of functions, we have that
  \(  -k\max_{1\leq i \leq k} n_{i}
  \leq \Phi\np{\wedge_{\poset\in \tilde{\Poset}} \poset} \).
  Thus, we get that
  \[
    \wedge_{\poset \in \Poset} \Phi\np{\poset}=-\infty \leq
    -k \max_{1\leq i \leq k} n_{i}\leq \Phi\np{\wedge_{\poset\in \tilde{\Poset}} \poset} \eqfinv
  \]
  hence the subset~$X$ is $\Phi$-inf-directed.

  Nevertheless, $\Poset$ is not an inf-directed subset of~$\POSETEXT$.
  Indeed, let, for all $k\in \NN$, the function $\psi_k$ be defined
  by $\psi_k =-k\mathbf{1}_{(k,k+1)}$, and let $n$ and $n'$ in $\NN$ be fixed such
  that $n\not=n'$. Assume that there exists $n''\in \NN$ such that
  $\psi_{n''} \le \psi_n \wedge \psi_{n'}$.
  Then, if $\Poset$ were an inf-directed subset of~$\POSETEXT$,
  we should have, using the definition of the functions
  $\na{\psi_{k}}_{k\in \NN}$, that the support of $\psi_{n''}$ should contain the
  set $(n,n+1)\cup(n',n'+1)$. However no function of~ $\Poset$ has for support the union
  of two such intervals of unit length.

  In this case, we can observe that the interchange between
  integration and minimization holds true. Indeed, on the one hand we have shown above that
  $\wedge_{\poset \in \Poset} \Phi\np{\poset} = -\infty$ and, on the other hand, we have that
  \[
    \Phi\np{\wedge_{\poset \in \Poset}x} \leq \int_{0}^{+\infty} \np{1-y}
    \lambda\np{\mathrm{d} y} = -\infty \eqfinv
  \]
  hence that
  $\wedge_{\poset \in \Poset} \Phi\np{\poset}= -\infty = \Phi\np{\wedge_{\poset \in
      \Poset}x}$.
\end{example}

\section{Applications to minimization on functional spaces}
\label{sec:MIT_int}

This section is devoted to applications of the Minimization Interchange
Theorem~\ref{th:Minimization_IT} (and its Corollary~\ref{cor:Minimization_IT})
to the case of interchange between (an
extension of) the Lebesgue integral and minimization for suitable subsets of
measurable functions.

In~\S\ref{Interchange_between_minimization_and_integration}, we treat the case
of interchange between minimization and integration,
and we recover both interchange theorems of Giner and Rockafellar-Wets.
In~\S\ref{Comparison_with_Shapiro}, we recover
an interchange theorem of Shapiro for order preserving functionals.
Lastly, in~\S\ref{sec:choquet}, we study the case of the
Choquet integral.

\subsection{Interchange between minimization and integration}
\label{Interchange_between_minimization_and_integration}

We consider a measured space $\np{\Omega, \tribu{F}, \mu}$.
We refer the reader to \appendixref{sec:outer_int} for material regarding
extended Lebesgue and outer integrals.
In~\S\ref{sec:mainresult_int}, we apply the abstract results of Section~\ref{sec:MIT}
to the case of suitable subsets of measurable functions,
and obtain a new Theorem~\ref{thm:integral_inf}.
In~\S\ref{Comparison_with_Giner}
and in~\S\ref{Comparison_with_Rockafellar_and_Wets}, we recover
the interchange theorems of Giner and Rockafellar-Wets from Theorem~\ref{thm:integral_inf}.

\subsubsection{Main result with integrals}
\label{sec:mainresult_int}

We apply the abstract results of Section~\ref{sec:MIT} to the case of subsets of
$\POSET = \Lpmu[\oplus]{1}{\barRR}$, the set of measurable functions with
Lebesgue integrable positive\footnote{%
  \emph{Mutatis mutandis}, we could as well consider ${\POSET} =
  \Lpmu[\ominus]{1}{\barRR}$, the set of measurable functions with Lebesgue
  integrable negative part and maximization in lieu of minimization.}
part.
We consider the interchange with the mapping $\Phi :
\POSET \to \barRR$ being the extended Lebesgue integral on~$\POSET$.

We state the main result about the interchange between the extended Lebesgue integral
$\int_{\Omega} : \Lpmu[\oplus]{1}{\barRR}  \to \barRR$ and minimization.

\begin{theorem}
  \label{thm:integral_inf}
  Let $X$ be a subset of $\Lpmu[\oplus]{1}{\barRR}$.
  Then, \( \essinf_{x\in X} x \in \Lpmu[\oplus]{1}{\barRR} \)
  and the following equality
  \begin{equation}
    \inf_{x \in X} \int_{\Omega}{x}\dd{\mu}= \int_{\Omega}{\essinf_{x\in X} x} \dd{\mu}
  \end{equation}
  is valid if an only if $X$ is \emph{integrably inf-directed},
  \emph{i.e.} for every finite family $x_1, \ldots, x_n$ in $X$ we have
  \begin{equation}
    % \label{InterchangeFromula_Giner}
    \inf_{x\in X} \int_{\Omega} x \dd{\mu} \leq
    \int_{\Omega} \inf_{1\leq i \leq n} x_i \dd{\mu}
    \eqfinp
  \end{equation}
\end{theorem}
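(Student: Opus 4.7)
The plan is to specialise Corollary~\ref{cor:Minimization_IT} to $\POSET = \widetilde\POSET = \Lpmu[\oplus]{1}{\barRR}$ (taken modulo $\mu$-a.e. equality) equipped with the $\mu$-pointwise order, $\YY = \barRR$ with its usual order, and $\Phi : \POSET \to \YY$ the extended Lebesgue integral $x \mapsto \int_{\Omega} x \dd\mu$. Under this dictionary the greatest lower bound $\winf$ on $\POSET$ is the essential infimum $\essinf$, the one on $\YY$ is the ordinary infimum in $\barRR$, and the abstract interchange formula~\eqref{eq:interchange_formula_inf_cor} becomes exactly the identity of the theorem. Moreover, the $\Phi$-inf-directed condition of Definition~\ref{def:phi-inf-directed} unfolds verbatim into the integrably inf-directed condition stated, since for a finite $\tilde{X} = \{x_1, \ldots, x_n\} \subset X$ one has $\Phi\bp{\winf_{x \in \tilde{X}} x} = \int_{\Omega} \inf_{1 \leq i \leq n} x_i \dd\mu$.

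I would then verify the three routine hypotheses. The extended real line $\barRR$ is a complete inf-semilattice; $\Lpmu[\oplus]{1}{\barRR}$ is an inf-semilattice because for $x_1, x_2$ in it the pointwise minimum is measurable and satisfies $(x_1 \wedge x_2)^+ \leq x_1^+$, keeping it in the space; monotonicity of the integral gives hypothesis~\eqref{it:Minimization_IT-op_cor}; and the countable inf property~\eqref{it:Minimization_IT-minseq} follows from the classical essential-infimum construction~\cite[Proposition II.4.1]{Ne1970}, which supplies a sequence $\sequence{x_n}{n\in\NN}$ in $X$ with $\essinf_{n \in \NN} x_n = \essinf_{x\in X} x$. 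This common value $\underline{x} := \essinf_{x\in X} x$ belongs to $\Lpmu[\oplus]{1}{\barRR}$ since, fixing any $x \in X$, $\underline{x}^+ \leq x^+$.

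The main obstacle is hypothesis~\eqref{it:Minimization_IT-liminf-at-x}: the sequentially-inf continuity of $\Phi$ at $\underline{x}$. For a nonincreasing sequence $\sequence{x'_n}{n\in\NN}$ in $\Lpmu[\oplus]{1}{\barRR}$ with $\essinf_{n \in \NN} x'_n = \underline{x}$, so that $x'_n \searrow \underline{x}$ $\mu$-a.e., the dominated convergence theorem is not available since elements of $\Lpmu[\oplus]{1}{\barRR}$ may have non-integrable negative part. I would therefore split $x'_n = (x'_n)^+ - (x'_n)^-$: the nonincreasing positive parts are dominated by $(x'_1)^+ \in L^1$, so dominated convergence gives $\int (x'_n)^+ \dd\mu \to \int \underline{x}^+ \dd\mu$, while the nondecreasing negative parts give $\int (x'_n)^- \dd\mu \to \int \underline{x}^- \dd\mu \in [0, +\infty]$ by monotone convergence. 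Subtracting in $\barRR$ with the usual conventions yields $\int x'_n \dd\mu \to \int \underline{x} \dd\mu$; monotonicity of the integral then gives $\inf_{n \in \NN} \Phi(x'_n) = \Phi(\underline{x})$, which is the required inequality~\eqref{eq:FatouLikeAssumption_inf}. All hypotheses of Corollary~\ref{cor:Minimization_IT} being satisfied, the equivalence in its statement delivers the theorem.
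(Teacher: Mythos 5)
Your proposal is correct and follows essentially the same route as the paper: specialize Corollary~\ref{cor:Minimization_IT} with $\POSET=\widetilde\POSET=\Lpmu[\oplus]{1}{\barRR}$, $\YY=\barRR$ and $\Phi=\int_\Omega$, checking the structural and countable-inf properties via Neveu's essential infimum and the sequentially-inf continuity via an extended monotone convergence argument. The only (harmless) divergence is in that last step, where you prove the convergence $\int x'_n\dd\mu\to\int\underline{x}\dd\mu$ by splitting into positive and negative parts (dominated convergence on $(x'_n)^+$, monotone convergence on $(x'_n)^-$), whereas the paper's Proposition~\ref{MCT_in_L1moins} subtracts the real-valued function $x_1$ and applies monotone convergence to the resulting nonpositive sequence; both arguments are valid.
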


\begin{proof}
  As being \emph{integrably inf-directed} defined here coincides with
  being $\Phi$-inf-directed (see Definition~\ref{def:phi-inf-directed}) when
  $\Phi = \int_{\Omega}$ is the extended Lebesgue integral
  on $\Lpmu[\oplus]{1}{\barRR}$, we will show that the assumptions of
  Corollary~\ref{cor:Minimization_IT} % Theorem~\ref{th:Minimization_IT}
  are fulfilled to obtain Theorem~\ref{thm:integral_inf}
  as a special case.

  The proof is broken into two parts. First, the assumptions of
  Corollary~\ref{cor:Minimization_IT} % Theorem~\ref{th:Minimization_IT}
  are satisfied by Proposition~\ref{semiInt_completeSemiLattice}. Namely, the structural
  assumptions on the domain of $\Phi : \Lpmu[\oplus]{1}{\barRR} \to \barRR$ are satisfied
  (see \S\ref{sec:MIT-main} for recalls on the notions below):
  \begin{itemize}
  \item
    The set $\POSET = \widetilde{\POSET} = \Lpmu{0}{\barRR}$ with the $\mu$-pointwise order is a complete inf-semilattice;
  \item
    Every subset $ \Poset \subset \widetilde{\POSET} = \POSET$ has the countable inf property.
  \end{itemize}
  Moreover, $\YY = \barRR$ with the usual order is a complete inf-semilattice.
  Second, by Proposition~\ref{semiint_seqcontinuous},
  the extended Lebesgue integral $\int_{\Omega} : \Lpmu[\oplus]{1}{\barRR} \to \barRR$ is
  order preserving and
  sequentially-inf continuous.

  This ends the proof.
\end{proof}

Note that, as semi-integrable functions --- that is, measurable functions with
either Lebesgue integrable positive part
or Lebesgue integrable negative part ---
are linked by the relation (see Lemma~\ref{lemma:minus_Lebesgue_integral})
\(
x \in \Lpmu[\ominus]{1}{\barRR} \Leftrightarrow -x \in \Lpmu[\oplus]{1}{\barRR}
\),
one can deduce a symmetric result about the interchange between extended Lebesgue integral and maximization.

% \subsection{Proof of Theorem~\ref{thm:integral_inf}}
% and Theorem~\ref{thm:integral_sup}}
% \label{sec:proof_thm_int}

We check in Proposition~\ref{semiInt_completeSemiLattice} (structural properties
of the spaces of measurable and semi-integrable functions) and
Proposition~\ref{semiint_seqcontinuous} (properties of the outer integral) that
the assumptions of the Minimization Interchange
Corollary~\ref{cor:Minimization_IT} % Theorem~\ref{th:Minimization_IT}
are satisfied.

% \subsection{The posets $\Lpmu{0}{\barRR}$, $\Lpmu[\oplus]{1}{\barRR}$ and $\Lpmu[\ominus]{1}{\barRR}$}

% We show that $\Lpmu[\oplus]{1}{\barRR}$ with the $\mu$-pointwise order is a
% complete semilattice and has the countable inf property.

\begin{proposition}[Structural properties of the space of measurable and semi-integrable functions] \label{semiInt_completeSemiLattice}
  \quad
  The set $\Lpmu{0}{\barRR}$ and its subset~$\Lpmu[\oplus]{1}{\barRR}$, both equipped with the $\mu$-pointwise order,
  are complete inf-semilattice with the countable inf property.
\end{proposition}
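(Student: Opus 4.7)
The plan is to reduce both claims to the classical existence theorem for the essential infimum of an arbitrary family of measurable functions, stated for instance as \cite[Proposition II.4.1]{Ne1970}, which is already invoked in the introduction of the paper.

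First, I would fix an arbitrary nonempty subset $X \subset \Lpmu{0}{\barRR}$, with the $\mu$-pointwise order understood on equivalence classes of $\mu$-a.e.\ equal measurable functions, so that antisymmetry holds and one genuinely has a poset. Neveu's theorem then produces an element $y \in \Lpmu{0}{\barRR}$ together with a countable subfamily $(x_n)_{n\in \NN}\subset X$ such that $y = \inf_{n\in \NN} x_n$ $\mu$-a.e., and such that $y$ is a $\mu$-a.e.\ lower bound of $X$ which dominates every other $\mu$-a.e.\ lower bound. By construction this $y$ is exactly $\winf_{x\in X} x$ in the $\mu$-pointwise order, and the very same sequence witnesses $\winf_{n\in \NN} x_n = \winf_{x\in X} x$, so both the complete inf-semilattice structure on $\Lpmu{0}{\barRR}$ and the countable inf property follow at once.

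For $\Lpmu[\oplus]{1}{\barRR}$, the only extra point is to check that the greatest lower bound of $X\subset \Lpmu[\oplus]{1}{\barRR}$, computed inside the larger space $\Lpmu{0}{\barRR}$, already lies back in $\Lpmu[\oplus]{1}{\barRR}$. Picking any $x_0\in X$, one has $y \leq x_0$ $\mu$-a.e., hence $y^+ \leq x_0^+$ $\mu$-a.e., and the integrability of $x_0^+$ forces that of $y^+$. Thus $y \in \Lpmu[\oplus]{1}{\barRR}$, so the complete inf-semilattice structure descends from $\Lpmu{0}{\barRR}$ to $\Lpmu[\oplus]{1}{\barRR}$; the witnessing countable family $(x_n)_{n\in\NN}$ remains in $X$ by construction, so the countable inf property transfers verbatim.

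I do not anticipate a serious technical obstacle here: the entire analytic content is contained in Neveu's essential infimum theorem, and what is left is bookkeeping about which ambient space the infimum belongs to, together with the elementary observation that taking a pointwise lower bound cannot enlarge the positive part.
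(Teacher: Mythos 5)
Your proof is correct. For $\Lpmu{0}{\barRR}$ you follow exactly the paper's route: everything rests on Neveu's essential infimum theorem, which simultaneously yields the greatest lower bound of an arbitrary nonempty family and a countable subfamily realizing it, so completeness and the countable inf property come together. Where you diverge is in showing that the infimum of a family in $\Lpmu[\oplus]{1}{\barRR}$ stays in $\Lpmu[\oplus]{1}{\barRR}$: the paper first rearranges the countable subfamily into a nonincreasing sequence and then invokes its extended monotone convergence theorem (Proposition~\ref{MCT_in_L1moins}) to conclude that the limit is again in $\Lpmu[\oplus]{1}{\barRR}$, whereas you observe directly that the infimum $y$ is dominated by any single $x_0\in X$, hence $y_+\leq (x_0)_+$ $\mu$-a.e.\ and $\int y_+\,d\mu<+\infty$. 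Your argument is more elementary and self-contained --- it is in fact the opening step of the paper's own proof of the monotone convergence theorem, extracted and used on its own --- and it dispenses with the need for the nonincreasing rearrangement at this stage. The paper's choice to route through the convergence theorem is organizational rather than logical: that theorem is needed anyway for the sequential-inf continuity of the integral in Proposition~\ref{semiint_seqcontinuous}, so citing it here costs nothing, but your shortcut is perfectly valid and arguably cleaner for the structural statement alone. One small point worth making explicit (you gesture at it with ``descends''): since a lower bound of $X$ taken in $\Lpmu[\oplus]{1}{\barRR}$ is also a lower bound in $\Lpmu{0}{\barRR}$, the element $y$, once known to lie in $\Lpmu[\oplus]{1}{\barRR}$, is automatically the greatest lower bound relative to the smaller poset as well.
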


\begin{proof} \quad

  % voir aussi Lemme 2.6.1 de meyer nieberg banach lattices
  \noindent $\bullet$ We consider the set $\Lpmu{0}{\barRR}$. First, the fact that
  it is a complete inf-semilattice is a consequence of the existence of the essential essential infimum
  for any family (countable or not) of class of random variables as proved in \cite[Proposition II.4.1]{Ne1970}
  (the proof is for probability measures but it extends easily to $\sigma$-finite measures).
  We rephrase here the existence result of~\cite[Proposition II.4.1]{Ne1970}. For any class family (countable or not)
  $\left\{ x_i\right\}_{i\in I}$ in $\Lpmu{0}{\barRR}$, there exists a unique
  class $\essinf_{i \in I} x_i \in \Lpmu{0}{\barRR}$ which is a \emph{greatest lower bound} of the
  family $\left\{ x_i\right\}_{i\in I}$. That is, for any function
  $\underline{x} \in \Lpmu{0}{\barRR}$, we have
  \begin{equation*}
    \forall i\in I, \underline{x} \leq x_i \Leftrightarrow \underline{x} \leq \essinf_{i\in I} x_i
    \eqfinp
  \end{equation*}
  The fact that there exists a countable subfamily $\left\{ x_{i_n} \right\}_{n\in \NN}$ such that
  \begin{equation*}
    \essinf_{i \in I} x_i = \inf_{n\in \NN} x_{i_n}
  \end{equation*}
  is not stated explicitly in~\cite[Proposition II.4.1]{Ne1970}, but it is stated in the proof as
  an intermediate result to obtain the essential infimum.
  It is immediate that the
  countable subfamily can be chosen as a nonincreasing sequence,
  a property that will be useful right below.
  \medskip

  \noindent $\bullet$
  We consider the set $\Lpmu[\oplus]{1}{\barRR}$ and consider a
  class family (countable or not) $\left\{ u_i\right\}_{i\in I}$ in $\Lpmu[\oplus]{1}{\barRR}$.
  As $\Lpmu[\oplus]{1}{\barRR}$ is a subset of $\Lpmu{0}{\barRR}$ we obtain (using the first part of
  the proof) the existence
  of $\essinf_{i \in I} u_i \in \Lpmu{0}{\barRR}$ and the existence of a nonincreasing countable
  subfamily $\left\{ u_{i_n} \right\}_{n\in \NN}$  such that
  \begin{equation*}
    \essinf_{i \in I} u_i = \inf_{n\in \NN} u_{i_n}
    \eqfinp
  \end{equation*}
  Using the monotone convergence theorem for $\Lpmu[\oplus]{1}{\barRR}$
  (see Proposition~\ref{MCT_in_L1moins} in \appendixref{sec:outer_int}), we obtain that
  $\essinf_{i \in I} u_i \in \Lpmu[\oplus]{1}{\barRR}$, as the infimum of a sequence in
  $\Lpmu[\oplus]{1}{\barRR}$. As a consequence,
  the subset~$\Lpmu[\oplus]{1}{\barRR}$ is a complete inf-semilattice which
  has the countable inf property.

  This ends the proof.
\end{proof}

% \subsection{Generalities on the outer integral}

\begin{proposition}[Properties of the outer and extended Lebesgue integrals]
  \label{semiint_seqcontinuous}
  \quad
  \begin{itemize}
  \item
    The outer integral~\eqref{eq:Le-outer-L0} %{eq:Le-outer-L0-Rbar}
    is an order preserving mapping between the posets
    \( \Lpmu{0}{\barRR} \) and $\barRR$.
  \item
    The extended Lebesgue integral~\eqref{eq:int_L1rondeplus} is both order preserving and
    sequ\-en\-tia\-lly-inf continuous on the inf-semilattice $\Lpmu[\oplus]{1}{\barRR}$.
  \end{itemize}
\end{proposition}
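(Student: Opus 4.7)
The proposition bundles three claims: order preservation for the outer integral on $\Lpmu{0}{\barRR}$, order preservation for the extended Lebesgue integral on $\Lpmu[\oplus]{1}{\barRR}$, and sequentially-inf continuity of the latter. The plan is to dispense with the two monotonicity claims in a few lines, then to concentrate on the continuity statement, which carries the bulk of the content.

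Both order-preservation statements reduce, via the usual decomposition into positive and negative parts, to the classical monotonicity of the nonnegative Lebesgue integral. If $x \Posetorderfirst x'$ $\mu$-almost everywhere, then $x^+ \leq (x')^+$ and $x^- \geq (x')^-$, so $\int_{\Omega} x^+ \dd\mu \leq \int_{\Omega} (x')^+ \dd\mu$ and $\int_{\Omega} x^- \dd\mu \geq \int_{\Omega} (x')^- \dd\mu$. Subtracting the two chains gives the desired inequality, the extended conventions recalled in \appendixref{sec:outer_int} handling the corner cases where a side formally looks like $(+\infty)-(+\infty)$; in the $\Lpmu[\oplus]{1}{\barRR}$ setting the positive parts have finite integrals, so no ambiguity arises at all.

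For the sequentially-inf continuity on $\Lpmu[\oplus]{1}{\barRR}$, I fix a nonincreasing sequence $\sequence{x_n}{n\in\NN}$ in $\Lpmu[\oplus]{1}{\barRR}$ whose infimum $\underline{x}$ exists in the poset, and note that, by the order preservation just established, the sequence $\sequence{\int_{\Omega} x_n \dd\mu}{n\in\NN}$ is nonincreasing in $\barRR$, so that its infimum coincides with its limit. The target inequality $\winf_{n\in\NN}\int_{\Omega} x_n\dd\mu \leq \int_{\Omega} \underline{x}\dd\mu$ therefore reduces to passing to the limit under the integral sign along this nonincreasing, $\mu$-a.e.\ convergent sequence, which is dominated above by $x_0 \in \Lpmu[\oplus]{1}{\barRR}$. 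This is exactly the scope of the monotone convergence theorem adapted to the semi-integrable setting, namely Proposition~\ref{MCT_in_L1moins} of \appendixref{sec:outer_int}, whose direct application yields the stronger identity $\lim_n \int_{\Omega} x_n\dd\mu = \int_{\Omega} \underline{x}\dd\mu$.

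The main obstacle, more a bookkeeping issue than a genuine analytical difficulty, is that the integrands are extended real-valued and that $\underline{x}$ (and the $x_n$) may take the value $-\infty$ on sets of positive $\mu$-measure; this is precisely the scenario that the choice of ambient space $\Lpmu[\oplus]{1}{\barRR}$, and the matching monotone convergence theorem of the appendix, are engineered to accommodate. In particular, a naive reduction of the form $y_n := x_0 - x_n$ to reuse the classical MCT for nonnegative functions would have to be carried out inside the semi-integrable framework of the appendix, since $x_0$ itself need not lie in $\Lpmu{1}{\barRR}$; this is why the direct appeal to Proposition~\ref{MCT_in_L1moins} is both cleaner and more natural.
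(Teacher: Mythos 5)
Your proof is correct, and on the substantive claim --- sequentially-inf continuity --- it is essentially the paper's argument: identify the poset infimum of the nonincreasing sequence with the pointwise ($\mu$-a.e.) infimum, note that the integrals form a nonincreasing sequence in $\barRR$ so that infimum and limit coincide, and invoke the extended monotone convergence theorem (Proposition~\ref{MCT_in_L1moins}) to conclude. Where you diverge is in the two order-preservation claims. The paper obtains monotonicity of the outer integral directly from Definition~\ref{def:Le-outer-L0}: if $x \le x'$ then $\bset{\psi\in\Lpmu{1}{\RR}}{x'\le\psi}\subset\bset{\psi\in\Lpmu{1}{\RR}}{x\le\psi}$, so the infimum over the smaller set dominates the infimum over the larger one; it then transfers this to the extended Lebesgue integral on $\Lpmu[\oplus]{1}{\barRR}$ via the coincidence result, Proposition~\ref{outerIsExtendedL}. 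You instead argue through the decomposition into positive and negative parts. That works, but it silently relies on two things: (i) for the \emph{outer} integral this decomposition is not the definition --- equating $\int^{*}_{\Omega} x\dd{\mu}$ with $\int_{\Omega}x_+\dd{\mu}\UppPlus\bp{-\int_{\Omega}x_-\dd{\mu}}$ is precisely Proposition~\ref{outerIsExtendedL}, so you are using the same appendix result as the paper, only earlier and without citing it; and (ii) ``subtracting the two chains'' requires checking that $\UppPlus$ is monotone in each argument (it is: $a\UppPlus b=+\infty$ forces $a=+\infty$ or $b=+\infty$, which persists when $a$ or $b$ increases), a corner-case verification that the paper's set-inclusion argument avoids entirely. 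Neither point is a gap, but the paper's route for monotonicity is the more economical one, while yours has the merit of treating both integrals uniformly through the same representation formula.
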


\begin{proof}
  Following Definition~\ref{def:Le-outer-L0} the outer integral is clearly order preserving between $\Lpmu{0}{\barRR}$ and $\barRR$. From Proposition~\ref{outerIsExtendedL}, both outer and extended Lebesgue integrals coincide on $\Lpmu[\oplus]{1}{\barRR} \subset \Lpmu{0}{\barRR}$, thus the extended Lebesgue integral is also order preserving between $\Lpmu[\oplus]{1}{\barRR}$ and $\barRR$. We prove that the extended Lebesgue integral is sequentially-inf continuous on $\Lpmu[\oplus]{1}{\barRR}$ using the extended monotone convergence Theorem~\ref{MCT_in_L1moins}.
  Let $\np{\fonctionun_n}_{n\in \NN}$ be an nonincreasing sequence of functions
  in~$\Lpmu[\oplus]{1}{\barRR}$. We put \( \fonctionun =\winf_{n \in \NN}
  \fonctionun_n \), which belongs to the complete inf-semilattice~$\Lpmu{0}{\barRR}$.
  By Proposition~\ref{MCT_in_L1moins}, we get that $\fonctionun \in
  \Lpmu[\oplus]{1}{\barRR}$ and that
  \( \winf_{n \in \NN} \int\fonctionun_n d\mu  = \int\fonctionun d\mu \)
  by~\eqref{eq:extended_monotone_convergence_theorem}.
  Thus, the outer integral~\eqref{eq:Le-outer-L0} is
  sequentially-inf continuous on the inf-semilattice $\Lpmu[\oplus]{1}{\barRR}$.
\end{proof}

% \subsection{Comparison with the literature}
% \label{sec:comparisons_lit}

\subsubsection{Comparison with Giner~\cite{Gi2009}}
\label{Comparison_with_Giner}

From Theorem~\ref{thm:integral_inf}, we now recover the interchange theorem of
Giner.

\begin{theorem}(\cite[Theorem 4.2]{Gi2009})
  \label{InterchangeGiner} Let $X$ be a subset of $\Lpmu{1}{\RR}$.
  The following equality
  \begin{equation}
    \inf_{x \in X} \int_{\Omega}{x}\dd{\mu}= \int_{\Omega}{\essinf_{x\in X} x} \dd{\mu}\; ,
  \end{equation}
  is valid if an only if $X$ is \emph{integrably inf-directed}, \emph{i.e.} for any finite family $x_1, \ldots, x_n$ in $X$ we have
  \begin{equation}
    \label{InterchangeFromula_Giner}
    \inf_{x\in X} \int_{\Omega} \np{x - \inf_{1\leq i \leq n} x_i }\dd{\mu} \leq  0 \; .
  \end{equation}
\end{theorem}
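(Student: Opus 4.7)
The plan is to derive Theorem~\ref{InterchangeGiner} as a direct consequence of Theorem~\ref{thm:integral_inf}. Since $\Lpmu{1}{\RR} \subset \Lpmu[\oplus]{1}{\barRR}$, any subset $X \subset \Lpmu{1}{\RR}$ is automatically a subset of $\Lpmu[\oplus]{1}{\barRR}$, and Theorem~\ref{thm:integral_inf} applies. It therefore suffices to show that the two ``integrably inf-directed'' conditions (the one in Theorem~\ref{thm:integral_inf} and the one in~\eqref{InterchangeFromula_Giner}) are equivalent when $X \subset \Lpmu{1}{\RR}$, after which the equality of integrals follows directly from Theorem~\ref{thm:integral_inf}.

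The key observation for this equivalence is that, although functions in $X$ are genuinely in $L^1$, finite infima of such functions remain in $L^1$. Indeed, for any finite family $x_1, \ldots, x_n \in \Lpmu{1}{\RR}$, the pointwise estimate
\begin{equation*}
\babs{\inf_{1 \leq i \leq n} x_i} \leq \sum_{i=1}^{n} \abs{x_i}
\end{equation*}
ensures that $\inf_{1 \leq i \leq n} x_i \in \Lpmu{1}{\RR}$, so that $\int_\Omega \inf_{1 \leq i \leq n} x_i \dd\mu$ is a finite real number. This avoids any $\infty - \infty$ ambiguity and lets me freely use linearity of the (classical) Lebesgue integral on $L^1$: for every $x \in X$,
\begin{equation*}
\int_\Omega \np{x - \inf_{1 \leq i \leq n} x_i} \dd\mu = \int_\Omega x \dd\mu - \int_\Omega \inf_{1 \leq i \leq n} x_i \dd\mu \eqfinp
\end{equation*}

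Taking the infimum over $x \in X$ on both sides and using that subtraction of a fixed finite real number commutes with $\inf$, the condition~\eqref{InterchangeFromula_Giner} of Giner is seen to rearrange exactly to
\begin{equation*}
\inf_{x \in X} \int_\Omega x \dd\mu \leq \int_\Omega \inf_{1 \leq i \leq n} x_i \dd\mu \eqfinv
\end{equation*}
which is the integrably inf-directed condition appearing in Theorem~\ref{thm:integral_inf}. Since the two conditions coincide, Theorem~\ref{thm:integral_inf} yields the required equivalence. There is no real obstacle here beyond the careful verification that the finite infimum lies in $L^1$, which is exactly what makes the linearity step valid; the step would break down if one only assumed $X \subset \Lpmu[\oplus]{1}{\barRR}$ rather than $X \subset \Lpmu{1}{\RR}$, which is precisely why Theorem~\ref{thm:integral_inf} is stated with the pre-integrated form and Giner's result in the difference form.
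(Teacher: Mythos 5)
Your proposal is correct and follows essentially the same route as the paper: the paper also obtains Giner's theorem as the special case of Theorem~\ref{thm:integral_inf} given the inclusion $\Lpmu{1}{\RR} \subset \Lpmu[\oplus]{1}{\barRR}$. You additionally spell out the reconciliation of the two forms of the integrably inf-directed condition (finite infima of $L^1$ functions stay in $L^1$, so linearity turns the difference form~\eqref{InterchangeFromula_Giner} into the pre-integrated form), a detail the paper leaves implicit but which is exactly the right thing to check.
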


As $\Lpmu{1}{\RR} \subset \Lpmu[\oplus]{1}{\barRR}$,
the interchange formula in Theorem~\ref{thm:integral_inf} is a slight generalization to $\Lpmu[\oplus]{1}{\barRR}$
of Giner's Theorem~\ref{InterchangeGiner} stated for subsets of $\Lpmu{1}{\RR}$.
This is no surprise, as we are indebted to Giner since
Theorem~\ref{thm:integral_inf} was greatly inspired by Giner's result.

\subsubsection{Comparison with Rockafellar and Wets \cite{Ro.We2009}}
\label{Comparison_with_Rockafellar_and_Wets}

We prove that the Rockafellar-Wets interchange theorem below can be deduced from
Theorem~\ref{thm:integral_inf} combined with~\cite[Theorem 3.1]{Gi2009}.

Let $\np{\Omega, \mathcal{F}, \mu}$ be a measured space with $\mu$ being a $\sigma$-finite measure.
As we work with subsets of measurable functions, the integral used here is the
outer integral \( \int^{*}_{\Omega} \)
(see Definition~\ref{def:Le-outer-L0} in \appendixref{sec:outer_int}).
Following \cite{Ro.We2009}, a subset $\Controlfunc \subset \Lpmu{0}{\RR^d}$ is said to be \emph{Rockafellar-Wets decomposable}
(w.r.t. the $\sigma$-finite measure $\mu$) if
\begin{equation}
  y \mathbf{1}_A + \controlfunc \mathbf{1}_{A^c} \in \Controlfunc
  \eqsepv
  \forall y \in L^{\infty}(A,\tribu{F},\mu ; \RR^d)
  \eqsepv
  \forall A \in \tribu{F} \eqsepv \mu\np{A} < +\infty
  \eqsepv
  \forall \controlfunc \in \Controlfunc
  \eqfinp
\end{equation}
The notion of decomposable subsets is widely used in $L^p$ spaces and we refer
the reader to~\cite{Gi2009} for a survey on various related definitions.

\begin{theorem}(\cite[Theorem~14.60]{Ro.We2009})
  \label{Interchange_Rockafellar-Wets}
  Let $\Controlfunc$ be a subset of $\Lpmu{0}{\RR^d}$ that is Rockafellar-Wets decomposable.
  % \footnote{See \Cref{Rockafellar-Wets_decomposable} with $\RR^d$ instead of $\barRR$}.
  Let $\fonctiondeux: \Omega \times \RR^d \rightarrow \barRR$ be a normal integrand\footnote{See \cite[Definition~14.27]{Ro.We2009}.}. If
  there exists $\bar{\controlfunc}\in \Controlfunc$ such that $\fonctiondeux \bp{\cdot,\bar{u}\np{\cdot}} \in \Lpmu[\oplus]{1}{\RR^d}$, one
  has that
  \begin{equation}
    \label{eq:interchange-rockafellar}
    \inf_{\controlfunc \in \Controlfunc} \int^{*}_{\Omega} \fonctiondeux \bp{\omega, \controlfunc(\omega)} \dd{\mu}(\omega)
    =\int^{*}_{\Omega} \Bp{ \inf _{\control \in \RR^{n}} \fonctiondeux (\omega, \control) }
    \dd{\mu}(\omega)
    \eqfinp
  \end{equation}
  Moreover, as long as this common value is not $-\infty,$ one has that
  \begin{equation*}
    \underline{\controlfunc} \in \argmin_{\controlfunc \in \Controlfunc}
    \int^{*}_{\Omega} \fonctiondeux \bp{\omega, \controlfunc(\omega)} \dd{\mu}(\omega)
    \iff
    \underline{\controlfunc} \in \Controlfunc \mtext{ and }
    \underline{\controlfunc}\np{\cdot} \in
    \argmin_{\control \in \RR^{n}} \fonctiondeux (\cdot, \control) \text{\Pps[\mu]}
    \eqfinp
  \end{equation*}
\end{theorem}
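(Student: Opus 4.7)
The plan is to translate the Rockafellar--Wets setting into the framework of Theorem~\ref{thm:integral_inf}, applied to
\[
X = \bigl\{ f(\cdot, u(\cdot)) : u \in \Controlfunc \bigr\} \subset \Lpmu{0}{\barRR},
\]
with the two key remaining inputs --- the identification of $\essinf_{x \in X} x$ with $\inf_{v \in \RR^d} f(\cdot, v)$, and the integrably inf-directed property of $X$ --- supplied by decomposability of $\Controlfunc$ together with \cite[Theorem~3.1]{Gi2009}.

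First I would reduce to the case $X \subset \Lpmu[\oplus]{1}{\barRR}$. Since $f(\cdot, \bar{u}(\cdot)) \in \Lpmu[\oplus]{1}{\barRR}$ by hypothesis, its outer integral is not $+\infty$, so any $u \in \Controlfunc$ for which the positive part of $f(\cdot, u(\cdot))$ fails to be $\mu$-integrable contributes $+\infty$ to $\int^{*}_{\Omega} f(\cdot, u(\cdot))\,d\mu$ and can be discarded without altering the left-hand side of~\eqref{eq:interchange-rockafellar}. After this reduction, $X$ sits inside the complete inf-semilattice $\Lpmu[\oplus]{1}{\barRR}$ of Proposition~\ref{semiInt_completeSemiLattice}.

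Next I would invoke \cite[Theorem~3.1]{Gi2009}, whose argument exploits exactly the normality of $f$ and the decomposability of $\Controlfunc$, to obtain both the pointwise identification
\[
\essinf_{x \in X} x(\omega) = \inf_{v \in \RR^d} f(\omega, v) \quad \mu\text{-a.e.}
\]
and the integrably inf-directed property of $X$. This last step is where I expect the main obstacle if done by hand: given $u_1, \ldots, u_n \in \Controlfunc$, one uses $\sigma$-finiteness to partition $\Omega$ into finite-measure pieces, further partitions each piece according to the index that minimizes $f(\omega, u_j(\omega))$ over $j \leq n$, and pastes the $u_i$ across the partition via the $L^{\infty}$-decomposability axiom, together with a truncation plus monotone-limit argument to lift the boundedness restriction inherent in the definition of Rockafellar--Wets decomposability. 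Delegating this bookkeeping to Giner's theorem is what makes the proof short.

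Finally I would apply Theorem~\ref{thm:integral_inf} to $X$ to deduce
\[
\inf_{u \in \Controlfunc} \int_{\Omega} f(\omega, u(\omega))\, d\mu(\omega) = \int_{\Omega} \inf_{v \in \RR^d} f(\omega, v)\, d\mu(\omega),
\]
where the right-hand extended Lebesgue integral coincides with the outer integral in~\eqref{eq:interchange-rockafellar} because the pointwise infimum of a normal integrand is $\tribu{F}$-measurable. For the argmin characterization, once the common value is finite, any $\underline{u} \in \Controlfunc$ attaining the infimum on the left-hand side of~\eqref{eq:interchange-rockafellar} satisfies $\int_{\Omega} \bigl[ f(\cdot, \underline{u}(\cdot)) - \inf_{v} f(\cdot, v) \bigr]\,d\mu = 0$; nonnegativity of the integrand then forces $\underline{u}(\omega) \in \argmin_{v \in \RR^d} f(\omega, v)$ for $\mu$-almost every $\omega$, and the converse implication is immediate from~\eqref{eq:interchange-rockafellar}.
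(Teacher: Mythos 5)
Your overall strategy --- realize the left-hand side as a minimization over the image set $X$, obtain the identification of the essential infimum with $\inf_{v} f(\cdot,v)$ and the integrably inf-directed property from decomposability via Giner, then invoke the abstract interchange theorem --- is the same as the paper's. The difference lies in how you handle the elements of $X$ that are not semi-integrable, and this is where there is a genuine gap. You discard every $u\in\Controlfunc$ with $\int_\Omega f(\cdot,u(\cdot))_+\,d\mu=+\infty$ (legitimate for the left-hand side, since such $u$ contribute $+\infty$ while $\bar u$ contributes a value $<+\infty$), and then apply Giner's Theorem~3.1 and the inf-directedness argument to the reduced set. But Giner's results are stated for the image of a decomposable set (resp.\ its $L^1$-upper set), and the reduced control set $\{u\in\Controlfunc : f(\cdot,u(\cdot))_+\in \Lpmu{1}{\RR}\}$ is in general no longer Rockafellar--Wets decomposable: patching a bounded $y$ into such a $u$ on a finite-measure set $A$ can destroy integrability of the positive part (e.g.\ if $f(\cdot,y(\cdot))=+\infty$ on a non-null part of $A$). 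So neither the identification $\essinf_{x}x=\inf_v f(\cdot,v)$ nor the inf-directedness is literally delivered by the cited results for the set you feed into Theorem~\ref{thm:integral_inf}. The gap is repairable --- anchor every patching to $\bar u$, whose composed positive part is integrable, so that the patched selections do land in the reduced set --- but that argument must be made explicitly; it is not covered by the delegation to Giner as written.

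The paper sidesteps this entirely by passing to the upper set $\OuterClosureLone{X}\subset\Lpmu{1}{\RR}$ of $X$: by the very definition of the outer integral, $\inf_{x\in X}\int^{*}_{\Omega}x\,d\mu=\inf_{x'\in\OuterClosureLone{X}}\int_{\Omega}x'\,d\mu$, the elements of $X$ with no integrable majorant simply contribute nothing to $\OuterClosureLone{X}$, Giner's Proposition~5.4 gives inf-directedness of $\OuterClosureLone{X}$ directly, and his Theorem~3.1 identifies $\bwedge_{x'\in\OuterClosureLone{X}}x'$ with $\inf_v f(\cdot,v)$. Adopting that substitution would align your proof with the paper's. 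Two smaller remarks: the coincidence of the outer and extended Lebesgue integrals on the right-hand side requires semi-integrability of $\inf_v f(\cdot,v)$ (which follows from $\inf_v f(\cdot,v)\le f(\cdot,\bar u(\cdot))$), not merely its measurability; and your argument for the argmin characterization (a nonnegative integrand with zero integral vanishes a.e.) is sound --- the paper in fact omits the proof of that part altogether.
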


The proof relies on the property that the image by a measurable mapping of a Rockafellar-Wets decomposable subset is an integrably inf-directed subset of $\Lpmu{0}{\barRR}$.

\begin{proof}(Equation~\eqref{eq:interchange-rockafellar} as a consequence of \cite[Theorem 3.1]{Gi2009} and the
  Minimization Interchange Corollary~\ref{cor:Minimization_IT}) % Theorem~\ref{th:Minimization_IT})
  \medskip

  \noindent $\bullet$ We introduce the set
  $X = \bset{\Omega \ni \omega \mapsto \fonctiondeux \np{\omega, \controlfunc(\omega)}}{\controlfunc \in \Controlfunc}$. Using the fact
  that the function~$\fonctiondeux$ is a normal integrand and that $\Controlfunc$ is a subset of
  $\Lpmu{0}{\RR^d}$, we obtain that $X$ is a subset of
  $\Lpmu{0}{\barRR}$ \cite[Theorem 14.37]{Ro.We2009} and we can write
  \begin{align*}
    % \label{eq:rockafellar-transform}
    \inf_{\controlfunc \in \Controlfunc} \int^{*}_{\Omega} \fonctiondeux \bp{\omega, \controlfunc(\omega)} \dd{\mu}(\omega)
    & =
      \bwedge_{x\in X} \int^{*}_{\Omega} x\np{\omega} \dd{\mu}(\omega)
      \eqfinp
      % \\
      % &= \bwedge_{u \in U \cap \Lpmu[\oplus]{1}{\barRR}} \int^{*}_{\Omega} u\np{\omega} \dd{\mu}(\omega)
      % \eqfinp
  \end{align*}
  Now, using the definition~\eqref{eq:Le-outer-L0} of the outer integral, we have that
  \begin{align*}
    \label{eq:rockafellar-transform-two}
    \bwedge_{x\in X} \int^{*}_{\Omega} x\np{\omega} \dd{\mu}(\omega)
    =
    \bwedge_{x\in X} \inf_{\substack{x' \in \Lpmu{1}{\RR} \\ x' \ge x}} \int_{\Omega} x'\np{\omega} \dd{\mu}(\omega)
    \eqfinp
  \end{align*}
  We define the upper set $\OuterClosureLone{X}$ of~$X$ in $\Lpmu{1}{\RR}$ by
  \begin{equation*}
    % \label{eq:outerclosure}
    \OuterClosureLone{X}
    = \bset{ x' \in \Lpmu{1}{\RR}}{ \exists x \in X \text{ s.t. } x \le x' \text{ \Pps[\mu]}}
    \eqfinp
  \end{equation*}
  The set $\OuterClosureLone{X}$ is not empty.
  Indeed, by assumption there exists $\bar{\controlfunc}\in \Controlfunc$
  such that $\overline{x}=\fonctiondeux \bp{\cdot,\bar{\controlfunc}\np{\cdot}} \in
  \Lpmu[\oplus]{1}{\barRR}$.
  As $\overline{x}$ belongs to~$X$, we conclude that $\overline{x}_{+} \in
  \OuterClosureLone{X}$, hence that $\OuterClosureLone{X}$ is not empty.
  Combining the three equations above,
  % Equation~\eqref{eq:rockafellar-transform}-\eqref{eq:outerclosure},
  we readily get that
  \begin{equation*}
    \inf_{\controlfunc \in \Controlfunc} \int^{*}_{\Omega} \fonctiondeux \bp{\omega, \controlfunc(\omega)} \dd{\mu}(\omega)
    =
    \bwedge_{ x' \in \OuterClosureLone{X}} \int_{\Omega} x' \np{\omega} \dd{\mu}(\omega)
    \eqfinp
  \end{equation*}
  % In addition, using the fact that there exists $\bar{u}\in U$
  % such that $\fonctiondeux \bp{\cdot,\bar{u}\np{\cdot}} \in \Lpmu[\oplus]{1}{\barRR}$ we have
  % that $\overline{x}=\fonctiondeux \bp{\cdot,\bar{u}\np{\cdot}}$ belongs to $X$.
  % % belongs to the domain of the
  % % outer integral mapping $\dom \bp{ \int_\Omega^*}$ which is precisely $\Lpmu[\oplus]{1}{\barRR}$. \benoit{We have $\dom \bp{ \int_\Omega^*} \subset \Lpmu[\oplus]{1}{\barRR}$ by Equation~\eqref{eq:outer-as-leb-diff}. But for the converse, the outer integral in $\Lpmu[\oplus]{1}{\barRR}$ can still be $-\infty$. Consider $f = -\infty \mathbf{1}_{[0,1]} \in \Lpmu[\oplus]{1}{\barRR}$ but  $\int_{\Omega}^* f = -\infty$.}
  % Therefore, the set $\OuterClosureLone{X}$ is not empty  as $\overline{x}_{+} \in \OuterClosureLone{X}$.
  \medskip

  \noindent $\bullet$
  By \cite[Proposition 5.4]{Gi2009}, as the function~$\fonctiondeux$ is a normal integrand and thus measurable, the set $\OuterClosureLone{X}$ is integrably inf-directed.
  % \benoit{Changed using Giner's proposition.}

  \noindent $\bullet$
  The last step to obtain~\eqref{eq:interchange-rockafellar} is to prove that
  \[
    \inf _{\control \in \RR^{n}} \fonctiondeux\np{\omega, \control} = \nwedge_{{x' \in \OuterClosureLone{X}}} x'(\omega)
    \eqfinv
  \]
  which is obtained using~\cite[Theorem 3.1]{Gi2009}.
\end{proof}

\subsection{Comparison with Shapiro \cite{Sh2017}}
\label{Comparison_with_Shapiro}

A restricted literature \cite{Pi.Sc2019, Sh2017} considers interchange theorems
not with integration but with more general monotone functionals.
We focus on \cite{Sh2017}, which examines the case of three posets~$\POSET$:
the set of continuous functions over a compact set;
a Euclidean space;
the normed linear space $\Lpmu{p}{\RR}$ equipped with the
$\mu$-pointwise order and its norm $\lVert \cdot \rVert_p$,
where $p\in [1, +\infty]$ and $(\Omega,\tribu{F},\mu)$ is a probability space.
For the sake simplicity, we will only consider the case
$\POSET = \Lpmu{p}{\barRR}$ and leave to the reader the two other cases, as
they can be treated similarly.

We use the Minimization Interchange Theorem~\ref{th:Minimization_IT}
to recover (an extended version of)
the interchange result of \cite{Sh2017}.

\begin{proposition}(extended from \cite[Proposition~2.1]{Sh2017})
  \label{prop:shapiro}
  Let $p\in [1, +\infty)$, $(\Omega,\tribu{F},\mu)$ be a probability space,
  $\bp{\Lpmu{p}{\RR}, \lVert \cdot \rVert_p}$ be the normed linear space of $p$-Lebesgue integrable functions equipped with the
  $\mu$-pointwise order, and $\Phi : \Lpmu{p}{\RR} \to \RR \cup \{ +\infty \}$ be an order
  preserving functional.

  Let $\CONTROL$ be a set and
  $\fonctiondeux: \Omega \times \CONTROL \rightarrow \barRR$ be a function.
  We define the mapping
  $G: {\mathbb{\CONTROL}}^{\Omega} \to {\barRR}^{\Omega}$ by
  $G(\controlfunc): \omega \ni \Omega \mapsto \fonctiondeux\bp{\omega,
    \controlfunc\np{\omega}} \in \barRR$, for all
  $\controlfunc \in {\mathbb{\CONTROL}}^{\Omega}$.
  We denote by $G^{\flat}\in {\barRR}^{\Omega}$ the function $G^{\flat}: \omega \mapsto \inf_{\control
    \in \CONTROL}\fonctiondeux\bp{\omega, \control}$
 and we assume that \(G^{\flat}\in \Lpmu{p}{\RR}\).
  Let $\Controlfunc \subset \mathbb{\Control}^{\Omega}$ be a subset of
  $\mathbb{\Control}^{\Omega}$.

  Suppose that
  \begin{enumerate}[label=$(S_{\arabic*})$,
    ref=$S_{\arabic*}$,start=1,topsep=\parsep]
  \item
    \label{item:subset_of_POSET}
    the image $X = G\np{\Controlfunc}$ of~$\Controlfunc \subset
    \mathbb{\Control}^{\Omega}$ by the mapping~$G: {\mathbb{\CONTROL}}^{\Omega}
    \to {\barRR}^{\Omega}$ is a subset of $\Lpmu{p}{\RR}$,
    that is, \( X = G\np{\Controlfunc} \subset \Lpmu{p}{\RR} \),
  \item
    \label{item:sequence_controlfunc}
    there exists a sequence $\sequence{\controlfunc_n}{n\in \NN}$ in
    $\Controlfunc$ such that
    \begin{enumerate}
    \item
      \label{item:norm-conv-sequence}
      $\lVert G(\controlfunc_n) - G^\flat \rVert_p \underset{n \to
        +\infty}{\longrightarrow} 0$, that is, the sequence $\sequence{G(\controlfunc_n)}{n\in \NN}$ (strongly)
      converges to $G^{\flat}$ in~$\bp{\Lpmu{p}{\RR}, \lVert \cdot \rVert_p}$,
    \item
      \label{item:continuity}
      \( \Phi\bp{G^{\flat}} \geq \underline{\lim}_{n \to+\infty}\Phi\bp{G(\controlfunc_n)} \)
      % the function $\Phi : \Lpmu{p}{\RR} \to \RR \cup \{ +\infty \}$ is (strongly)
      % continuous at~$G^{\flat}$.
      % the function~$G^{\flat}$ belongs to~$\Lpmu{p}{\RR}$ (that is, $G^{\flat} \in \Lpmu{p}{\RR}$), and the mapping
      % $\Phi : \Lpmu{p}{\RR} \to \RR \cup \{ +\infty \}$ is (strongly)
      % continuous at~$G^{\flat}$.
      % \( \Phi\np{G^\flat} \geq \underline{\lim}_{n \to
      % +\infty}\Phi\bp{G(\controlfunc_n)} \)
      (an assumption which holds true when the mapping
      $\Phi : \Lpmu{p}{\RR} \to \RR \cup \{ +\infty \}$ is (strongly)
      continuous at~$G^{\flat}$).
      % \end{enumerate}
    \end{enumerate}
  \end{enumerate}
  Then, we have that
  \begin{equation}
    \label{eq:shapiro0}
    \inf_{x \in X} \Phi\bp{x} = \Phi \bp{G^{\flat}}
    \eqfinp
  \end{equation}
\end{proposition}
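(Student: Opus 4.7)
The plan is to apply the Minimization Interchange Theorem~\ref{th:Minimization_IT} with the poset $\POSET = \widetilde\POSET = \Lpmu{p}{\RR}$ carrying the $\mu$-pointwise order (which is an inf-semilattice, since $\min(f,g) \in \Lpmu{p}{\RR}$ whenever $f,g \in \Lpmu{p}{\RR}$), the complete inf-semilattice $\YY = \barRR$ with its usual order (into which $\Phi$ takes values as $\RR \cup \{+\infty\} \subset \barRR$), and the given mapping $\Phi$. Hypothesis~\eqref{it:Minimization_IT-op} coincides with $\Phi$ being order preserving, and by~\eqref{item:subset_of_POSET} we have $X = G(\Controlfunc) \subset \Lpmu{p}{\RR}$, so the remaining tasks are to check~\eqref{it:Minimization_IT-liminf-one-seq} and that $X$ is $\Phi$-inf-directed.

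First, I would identify $\winf_{x \in X} x$ with $G^\flat$. Since $G^\flat(\omega) \leq g(\omega, u(\omega))$ pointwise for every $u \in \Controlfunc$, the function $G^\flat \in \Lpmu{p}{\RR}$ is a $\mu$-pointwise lower bound of $X$. Using the $L^p$-convergence $G(u_n) \to G^\flat$ from assumption~\eqref{item:norm-conv-sequence}, after extracting a $\mu$-a.e.\ convergent subsequence (still denoted $u_n$), any lower bound $y \in \Lpmu{p}{\RR}$ of $X$ satisfies $y \leq \lim_n G(u_n) = G^\flat$ $\mu$-a.e., so $G^\flat = \winf_{x \in X} x$ in $\Lpmu{p}{\RR}$. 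Setting $x_n := G(u_n) \in X$, the same argument yields $\winf_{n \in \NN} x_n = G^\flat$. Define next $x'_n := \winf_{k \leq n} x_k \in \Lpmu{p}{\RR}$; then $G^\flat \leq x'_n \leq x_n$, and $\{\Phi(x'_n)\}_n$ is nonincreasing by order preservation. Consequently,
\begin{equation*}
\winf_{n \in \NN} \Phi(x'_n) = \lim_n \Phi(x'_n) \leq \liminf_n \Phi(x_n) \leq \Phi\np{G^\flat} = \Phi\bp{\winf_{n \in \NN} x_n},
\end{equation*}
where the first inequality uses $\Phi(x'_n) \leq \Phi(x_n)$ and the second is precisely assumption~\eqref{item:continuity}. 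This is~\eqref{eq:FatouLikeAssumptionForOnesequence_inf}, completing the verification of~\eqref{it:Minimization_IT-liminf-one-seq}.

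It remains to check that $X$ is $\Phi$-inf-directed. For any finite subset $\tilde X \subset X$, the bound $G^\flat \leq \winf_{x \in \tilde X} x$ and the order preservation of $\Phi$ give $\Phi\np{G^\flat} \leq \Phi\bp{\winf_{x \in \tilde X} x}$, while $\winf_{x \in X} \Phi(x) \leq \Phi(x_n)$ combined with assumption~\eqref{item:continuity} yields $\winf_{x \in X} \Phi(x) \leq \liminf_n \Phi(x_n) \leq \Phi\np{G^\flat}$. Chaining these gives $\winf_{x \in X} \Phi(x) \leq \Phi\bp{\winf_{x \in \tilde X} x}$, matching Definition~\ref{def:phi-inf-directed}. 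Theorem~\ref{th:Minimization_IT} then delivers~\eqref{eq:shapiro0}. The only subtle point is the passage from $L^p$-convergence to $\mu$-a.e.\ convergence used to identify $\winf_{x \in X} x$ with $G^\flat$, handled by the standard extraction of a $\mu$-a.e.\ convergent subsequence from an $L^p$-convergent one.
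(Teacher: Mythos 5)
Your proposal is correct and follows essentially the same route as the paper: apply Theorem~\ref{th:Minimization_IT} with $\POSET=\Lpmu{p}{\RR}$, identify $\winf_{x\in X}x$ with $G^{\flat}$ via the approximating sequence, verify~\eqref{eq:FatouLikeAssumptionForOnesequence_inf} through the chain $\winf_n\Phi(x'_n)\leq\liminf_n\Phi(x_n)\leq\Phi(G^{\flat})$, and establish $\Phi$-inf-directedness by the same inequalities. The only cosmetic differences are that you take $\widetilde\POSET=\Lpmu{p}{\RR}$ rather than the upper set $\OuterClosureBasic\na{\underline{x}}$ (both are inf-semilattices) and you identify the essential infimum via an a.e.-convergent subsequence instead of the paper's norm squeeze $\norm{G^{\flat}-\winf_{n'}G(\controlfunc_{n'})}_p\le\norm{G^{\flat}-G(\controlfunc_n)}_p$; both are valid.
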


\begin{proof}
  With the notation of Sect.~\ref{sec:MIT},
  we set \( \POSET=\Lpmu{p}{\RR} \) which is both a normed linear space and a poset when equipped with the
  $\mu$-pointwise order.

  We prove Equation~\eqref{eq:shapiro0} by showing two equalities
  \begin{equation}
    \label{eq:shapiro1}
    \winf_{x\in X} x = G^{\flat} \text{ and } \inf_{x \in X} \Phi\bp{x} = \Phi\bp{ \winf_{x\in X} x}
    \eqfinv
    % \label{eq:two-step-interchange}
  \end{equation}
  where the right hand side is an interchange formula and the left hand side
  shows that the essential infimum over $X$ is realized by the pointwise
  infimum~$G^{\flat}$, as done in~\cite[Theorem 3.1]{Gi2009}.

  We prove the left hand side equality in Equation~\eqref{eq:shapiro1} as follows.
  First, we prove that the function~$G^\flat$ satisfies $G^\flat \le  \inf_{\controlfunc \in \Controlfunc} G\np{\controlfunc}$.
  By definition of $G^{\flat}: \omega \mapsto
  \inf_{\control \in \CONTROL}\fonctiondeux\bp{\omega, \control}$,
  we have that $G^{\flat}\np{\omega} \le G\np{\controlfunc}\np{\omega}=\fonctiondeux\bp{\omega, \controlfunc\np{\omega}}$ for all $\controlfunc \in \Controlfunc$ and $\omega \in \Omega$.
  Thus, we get that $G^{\flat} \le  G\np{\controlfunc}$ for all $\controlfunc \in \Controlfunc$.

  % and therefore that $G^{\flat} \le \winf_{\controlfunc \in \Controlfunc} G\np{\controlfunc} = \winf_{x \in X} x$.

  Second, we prove that
  $G^{\flat} = \winf_{\controlfunc \in \Controlfunc} G\np{\controlfunc}= \winf_{n
    \in \NN} G(\controlfunc_{n})$, where the
  sequence $\sequence{\controlfunc_n}{n\in \NN}$ is given by
  Assumption~\eqref{item:norm-conv-sequence}.
  Using the just proven property that
  $G^{\flat} \le  G\np{\controlfunc}$ for all $\controlfunc \in \Controlfunc$,
  and the fact that $\controlfunc_n \in\Controlfunc$ for all
  $n \in \NN$, we obtain the following inequalities between functions in
  ${\barRR}^{\Omega}$:
  \begin{equation}
    G^{\flat} \leq
    \winf_{\controlfunc \in \Controlfunc} G\np{\controlfunc}
    \leq \winf_{n \in \NN}G\np{\controlfunc_n}
    \leq G\np{\controlfunc_0}
    \eqfinp
    \label{eq:three_winf_inequalities}
  \end{equation}
  We now show that
  \( G^\flat=  \winf_{n \in \NN}G\np{\controlfunc_n} \)
  where this equality is to be understood as an equality between classes in the
  space~$\Lpmu{p}{\RR}$.
  As $G\np{\Controlfunc}\subset\POSET =\Lpmu{p}{\RR} $  by
  Assumption~\eqref{item:subset_of_POSET}, we get that
  \( G\np{\controlfunc_0} \in \Lpmu{p}{\RR} \), hence so is
  $\winf_{n \in \NN}G\np{\controlfunc_n}$.
  % the function~$G^\flat$ is a lower bound of the sequence
  % $\sequence{G\np{\controlfunc_n}}{n\in \NN}$. Moreover, as $G\np{\Controlfunc}\subset\POSET$  by
  % Assumption~\eqref{item:subset_of_POSET}, the sequence $\sequence{G\np{\controlfunc_n}}{n\in \NN}$ is in $\POSET$. Thus, using the know fact that $\POSET$
  % is $\sigma$-Dedekind complete (every bounded from below nonempty countable subset of $\POSET$  has a greatest lower bound) we obtain that
  % $\winf_{n \in \NN}G\np{\controlfunc_n}$ is in $\POSET$.
  Now, as
  \( 0 \leq \winf_{n' \in \NN} G(\controlfunc_{n'}) -G^{\flat}
  \leq G(\controlfunc_{n}) -G^{\flat} \)
  % $|G^{\flat} - \winf_{n' \in \NN} G(\controlfunc_{n'})| \le |G^{\flat} -G(\controlfunc_{n})|$
  for all $n\in \NN$,
  we obtain the inequality $\norm{G^{\flat} - \winf_{n' \in \NN}
    G(\controlfunc_{n'})}_{p} \le \norm{G^{\flat} - G(\controlfunc_{n})}_{p}$
  between $L^p$-norms.
  As, by Assumption~\eqref{item:norm-conv-sequence}, the sequence
  $\sequence{G(\controlfunc_{n})}{n\in \NN}$ strongly converges to $G^\flat$,
  we deduce that $G^{\flat} = \winf_{n \in \NN} G(\controlfunc_{n})$
  in~$\Lpmu{p}{\RR}$.
  By~\eqref{eq:three_winf_inequalities}, we conclude that
  $G^\flat= \winf_{\controlfunc \in \Controlfunc} G\np{\controlfunc}=  \winf_{n \in \NN}G\np{\controlfunc_n}$
  in~$\Lpmu{p}{\RR}$.
  % %
  % Finally, $\winf_{\controlfunc \in \Controlfunc} G\np{\controlfunc}$ as a function in $\barRR^{\Omega}$ is well defined and satisfies
  % $G^\flat \le \winf_{\controlfunc \in \Controlfunc} G\np{\controlfunc} \le  \winf_{n \in \NN}G\np{\controlfunc_n}$. Hence, we conclude that
  % $\winf_{\controlfunc \in \Controlfunc} G\np{\controlfunc}$ is in $\POSET$ and satisfy
  % $G^\flat= \winf_{\controlfunc \in \Controlfunc} G\np{\controlfunc}=  \winf_{n \in \NN}G\np{\controlfunc_n}$. This concludes the second step.

  Third, we define $x_n =G(\controlfunc_n)$ for all $n\in \NN$,
  with the property that the sequence $\sequence{x_n}{n\in \NN}$ is in~$\POSET$
  by Assumption~\eqref{item:subset_of_POSET}.
  Setting \( \underline{x}=G^{\flat} \), we have obtained the following
  equalities in the poset $\POSET =\Lpmu{p}{\RR} $:
  \begin{equation}
    \underline{x} =G^{\flat} = \winf_{\controlfunc \in \Controlfunc} G\np{\controlfunc}= \winf_{x \in X} x=  \winf_{n \in \NN} x_n
    \eqfinp
    \label{eq:X-countable-inf-property}
  \end{equation}
  We have thus proved the left hand side equality in Equation~\eqref{eq:shapiro1}.
  \medskip

  Now, we prove that the interchange formula in the right hand side of Equation~\eqref{eq:shapiro1}
  is satisfied by checking that the assumptions of Theorem~\ref{th:Minimization_IT} hold true.
  For this purpose, we define the subset
  $\widetilde{\POSET} =\OuterClosureBasic \na{\underline{x}}= \nset{x \in \POSET}{
    x\geq \underline{x}}$ of $\POSET$. Then as a finite infimum of functions in
  $\Lpmu{p}{\RR}$ is also in $\Lpmu{p}{\RR}$, the subset $\widetilde{\POSET}$ is an
  inf-semilattice of the poset $\POSET$.

  As the mapping~$\Phi$ is order preserving,
  Assumption~\eqref{it:Minimization_IT-op}  in Theorem~\ref{th:Minimization_IT}
  holds true.
  Also, we have already proven in~\eqref{eq:X-countable-inf-property}
  that Equation~\eqref{eq:Minimization_IT-liminf-one-seq} holds true,
  which represents half of Assumption~\eqref{it:Minimization_IT-liminf-one-seq} of
  Theorem~\ref{th:Minimization_IT}.
  To prove~\eqref{eq:FatouLikeAssumptionForOnesequence_inf}, the second half of Assumption~\eqref{it:Minimization_IT-liminf-one-seq} of
  Theorem~\ref{th:Minimization_IT}, we consider
  the nondecreasing sequence $\sequence{x'_n}{n\in \NN}$ in the inf-semilattice~$\widetilde{\POSET}$
  defined, for all $n\in \NN$, by $x'_n = \winf_{k\leq n} x_k$.
  As \( \underline{x} \leq x'_n  \leq x_n \) for all $n\in \NN$,
  and as the sequence $\sequence{x_n}{n\in \NN}$ strongly converges to
  $\underline{x}$,
  we readily get that  so does the sequence $\sequence{x'_n}{n\in \NN}$.
  
  Now, we get that
  \begin{align*}
    \Phi\bp{\underline{x}}
    = \Phi\bp{G^{\flat}}
    &\geq
      \underline{\lim}_{n\to+\infty}\Phi\bp{G(\controlfunc_n)}
      \tag{by Assumption~\eqref{item:continuity}}
    \\
    &=
      \underline{\lim}_{n\to+\infty}\Phi\np{x_n}
      \tag{as \( x_n=G(\controlfunc_n) \)}
    \\
    & \geq
      \underline{\lim}_{n\to+\infty}\Phi\np{x'_n}
      \intertext{as the mapping~$\Phi$ is order preserving and as
      \( x_n  \geq x'_n \) }
    &=
      \winf_{n\in \NN} \Phi\np{x'_n}
  \end{align*}
  as the sequence $\sequence{x'_n}{n\in \NN}$ is nondecreasing, hence so is
  the sequence $\sequence{\Phi\np{x'_n}}{n\in \NN}$.
  Thus, we have shown that~\eqref{eq:FatouLikeAssumptionForOnesequence_inf} holds true.

  % by
  % Assumption~\eqref{item:continuity}, we get that
  % \[
  %   \Phi\bp{G^{\flat}} \geq \underline{\lim}_{n
  %   \to+\infty}\Phi\bp{G(\controlfunc_n)}
  % \]

  % the function~$\Phi$ is continuous;
  % as $x'_n  \underset{n \to \infty}\longrightarrow \underline{x}$, we get that
  % \begin{align*}
  %   \winf_{n\in \NN} \Phi\np{x'_n} xxx \leq  \lim_{n \to \infty} \Phi\np{x'_n}
  %   =   \Phi\np{\underline{x}}
  %   \eqfinp
  % \end{align*}
  As a consequence, we have shown that Assumption~\eqref{it:Minimization_IT-liminf-one-seq} in
  Theorem~\ref{th:Minimization_IT} holds true.
  \medskip

  Finally, we prove that the subset~$\Poset$ is $\Phi$-inf-directed.
  For this purpose, we consider a finite subset $\Poset'$ of $\Poset$.
  Recall that $\sequence{x_n}{n\in \NN}$ denotes a sequence in $X$ which strongly converges to $\underline{x} = \winf_{x\in X} x$ and which realizes the infimum over $X$, \emph{i.e.} $\winf_{n\in \NN} x_n = \winf_{x\in X} x$. We successively have
  \begin{align*}
    \Phi\np{ \winf_{x\in \Poset'} x}
    & \geq \Phi\np{ \winf_{x\in \Poset} x}
      \tag{as $\Phi$ is order preserving and $\winf_{x\in \Poset'} x \geq \winf_{x\in \Poset} x$}
    \\
    & = \Phi\np{\underline{x}}
      \eqfinv                                                                                                                        \\
    & \geq
      \underline{\lim}_{n\to+\infty}\Phi\np{x_n}
      \tag{by Assumption~\eqref{item:continuity} as \( x_n=G(\controlfunc_n) \)}
    \\
    & \geq \winf_{n \in \NN} \Phi\np{x_n}
    \\
    & \geq \winf_{x\in X} \Phi\np{x}  \tag{as $x_n\in \Poset$ for all $n\in \NN$}
      \eqfinv
  \end{align*}
  which shows that $\Poset$ is $\Phi$-inf-directed and ends the proof.
\end{proof}

\subsection{Interchange between minimization and Choquet's integral}
\label{sec:choquet}

Let $\np{\Omega, \mathcal{F}}$ be a measurable space. We specialize the
Minimization Interchange Theorem~\ref{th:Minimization_IT} to the poset of
nonnegative measurable functions
\[
  \POSET = \left\{ x : \Omega \to \barRR \mid x \geq 0 \text{ and measurable}  \right\}
\]
with the pointwise order and the Choquet integral $\Phi = \ChoquetInt$ that we
define below. We suggest \cite{Ka2018} and the references therein for properties
of the Choquet integral. One main difference of the Choquet integral compared to
the Lebesgue integral is that it is nonadditive.

A \emph{capacity} $c : \mathcal{F} \to \barRR$ is a function which is order preserving ($\forall F_1, F_2 \in \mathcal{F}, F_1 \subset F_2 \Rightarrow c\np{F_1} \leq c\np{F_2}$) and such that $c\np{\emptyset} = 0$. Given a capacity $c$, the Choquet integral of a nonnegative measurable function $x \in \POSET$ is defined by
\begin{equation}
  \ChoquetInt_{\Omega} x\np{\omega} \dd c \np{\omega} =
  \int_{\RR_+} c\bp{x > t} \dd t \eqfinv
  \label{eq:ChoquetInt}
\end{equation}
where the integral on the right-hand side is the Lebesgue integral of an
nonincreasing function. A capacity $c$ is said to be \emph{continuous from
  above} if, for any nondecreasing sequence $\left\{F_n\right\}_{n\in
  \NN} \subset \mathcal{F}$ of sets such that $F = \cap_{n\in \NN} F_n \in \mathcal{F}$,
we have that $c\np{F_n} \underset{n\to +\infty}{\longrightarrow} F$.
Lastly, we say that a subset $X \subset \POSET$ of functions is \emph{Choquet integrably inf-directed} if
it is integrably inf-directed with the Choquet integral~\eqref{eq:ChoquetInt},
as in Definition~\ref{def:phi-inf-directed}.

We readily get the following result, as an application of
Theorem~\ref{th:Minimization_IT}.
\begin{proposition}
  Let $\np{\Omega, \mathcal{F}}$ be a measurable space,
  \( \POSET = \left\{ x : \Omega \to \barRR \mid x \geq 0 \right. \)
  and measurable \( \left. \right\} \) be the poset of
  nonnegative measurable functions, and $c$ be a continuous from above capacity .

  If $X = \left\{ x_i \right\}_{i\in I} \subset \POSET$ is a family of nonnegative measurable functions with the
  countable inf property, we have that
  \[
    \winf_{i\in I} \ChoquetInt_{\Omega} x_i \dd c = \ChoquetInt_{\Omega} \winf_{i \in I} x_i \dd c
  \]
  if, and only if, $X$ is Choquet integrably inf-directed.
\end{proposition}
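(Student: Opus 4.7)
The plan is to invoke the Minimization Interchange Corollary~\ref{cor:Minimization_IT} with \( \POSET = \widetilde{\POSET} \) equal to the given cone of nonnegative measurable functions (so that finite pointwise infima stay in \( \POSET \), making \( \POSET \) an inf-semilattice), with \( \YY = \barRR \) equipped with the usual order, and with \( \Phi = \ChoquetInt_\Omega (\cdot)\,\dd c \). Assumption~\eqref{it:Minimization_IT-minseq} is built into the hypotheses of the proposition, since \( X \) is assumed to have the countable inf property. Order preservation of \( \Phi \) (Assumption~\eqref{it:Minimization_IT-op_cor}) is immediate from~\eqref{eq:ChoquetInt}: if \( x \leq y \) pointwise, then \( \{x > t\} \subseteq \{y > t\} \) for every \( t \geq 0 \), hence \( c(\{x > t\}) \leq c(\{y > t\}) \) as \( c \) is order preserving on \( \mathcal{F} \), and integrating over \( t \) preserves the inequality.

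The heart of the proof is verifying Assumption~\eqref{it:Minimization_IT-liminf-at-x}, namely sequentially-inf continuity of \( \Phi \) at \( \underline{x} = \winf_{x\in X} x \). Given a nonincreasing sequence \( (x_n)_{n\in\NN} \) in \( \POSET \) with pointwise infimum \( \underline{x} \), for each fixed \( t \geq 0 \) the sets \( F_n = \{x_n > t\} \) form a nonincreasing sequence in \( \mathcal{F} \) whose intersection is \( \{\underline{x} \geq t\} \). Continuity from above of the capacity then yields \( c(F_n) \to c(\{\underline{x} \geq t\}) \) as \( n\to+\infty \). Because the nonincreasing function \( t \mapsto c(\{\underline{x} \geq t\}) \) differs from \( t \mapsto c(\{\underline{x} > t\}) \) only at its at-most-countably-many points of discontinuity, the two coincide Lebesgue-almost everywhere on \( \RR_+ \). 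Therefore the nonincreasing sequence of nonnegative integrands \( t \mapsto c(F_n) \) converges a.e.\ to \( t\mapsto c(\{\underline{x} > t\}) \), and a Lebesgue convergence argument (monotone or dominated, with a standard case split to handle \( \ChoquetInt x_0 \dd c = +\infty \)) produces \( \winf_n \ChoquetInt x_n \dd c = \ChoquetInt \underline{x} \dd c \), which delivers~\eqref{eq:FatouLikeAssumption_inf}.

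Once the three assumptions of Corollary~\ref{cor:Minimization_IT} have been verified in this manner, the equivalence between the interchange formula and Choquet-integrable inf-directedness of \( X \) — which coincides with \( \Phi \)-inf-directedness in the sense of Definition~\ref{def:phi-inf-directed} when \( \Phi \) is the Choquet integral — falls out immediately, with nothing further to prove. I expect the main obstacle to be the Lebesgue convergence step on \( \RR_+ \), both because it is the only place where an analytic ingredient beyond the abstract framework is needed and because it requires some care when the capacity \( c \) takes the value \( +\infty \) on sets of the form \( \{x_0 > t\} \); the decomposition of \( t\mapsto c(\{\underline{x} \geq t\}) \) against \( t\mapsto c(\{\underline{x} > t\}) \) through the at-most-countable set of jumps is the pivotal measure-theoretic observation.
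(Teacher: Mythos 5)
Your overall route is the same as the paper's: both proofs reduce the statement to the abstract interchange result by checking that the cone of nonnegative measurable functions with the pointwise order is an inf-semilattice, that the Choquet integral is order preserving, and that it is sequentially-inf continuous, the last point resting on a downward monotone convergence theorem for the Choquet integral under continuity from above of the capacity. The only real difference is that the paper cites Kawabe (\cite[Proposition 2.3]{Ka2018} for monotonicity and \cite[Theorem 3.2.(2)]{Ka2018} for the convergence theorem) where you rederive both from the layer-cake formula~\eqref{eq:ChoquetInt}. Your derivation contains one small slip: $\bigcap_{n}\{x_n>t\}$ is not $\{\underline{x}\ge t\}$ in general (take $x_n\equiv t$ for all $n$); one only has $\{\underline{x}>t\}\subseteq\bigcap_{n}\{x_n>t\}\subseteq\{\underline{x}\ge t\}$. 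This sandwich is still enough for your almost-everywhere identification via the countably many jumps of a nonincreasing function, so that slip is harmless.

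The genuine gap is the step you yourself flag, the ``standard case split to handle $\ChoquetInt_{\Omega} x_0\,\dd c=+\infty$'': there is no such case split, because the downward monotone convergence theorem for the Choquet integral is false without a finiteness hypothesis, even for a continuous-from-above capacity. Take $\Omega=\NN$, $\mathcal{F}=2^{\NN}$, $c(A)=\sum_{k\in A}2^{-k}$ (a finite $\sigma$-additive measure, hence a capacity continuous from above), and $x_n=\sum_{k\ge n}2^{k}\mathbf{1}_{\{k\}}$. This sequence is nonincreasing with pointwise infimum $0$, yet, since $c$ is a measure, $\ChoquetInt_{\Omega} x_n\,\dd c=\sum_{k\ge n}2^{k}2^{-k}=+\infty$ for every $n$ while $\ChoquetInt_{\Omega} 0\,\dd c=0$; hence $\winf_{n}\ChoquetInt_{\Omega} x_n\,\dd c=+\infty\not\le 0$ and sequential-inf continuity fails at $\underline{x}=0$. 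When some $\ChoquetInt_{\Omega} x_{n_0}\,\dd c$ is finite your dominated-convergence argument does go through, but in the all-infinite case the conclusion you need is simply not true, and Kawabe's Theorem 3.2.(2) correspondingly carries the hypothesis that some term of the sequence has finite Choquet integral. The paper's own proof quotes that theorem without its finiteness hypothesis and so inherits exactly the same defect: the family above is a chain, hence inf-directed, hence Choquet integrably inf-directed by Lemma~\ref{lemma:DirectedImpliesIntegrably_Directed}, and it has the countable inf property, yet the interchange fails. So you have located a real weak point of the statement, not merely of your write-up; closing it requires an added hypothesis such as the existence of some $x_{i_0}\in X$ with $\ChoquetInt_{\Omega} x_{i_0}\,\dd c<+\infty$.
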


\begin{proof}
  We check that the assumptions of Theorem~\ref{th:Minimization_IT} are
  satisfied.
  \begin{itemize}
  \item
    The set $\POSET = \left\{ x : \Omega \to \barRR \mid x \geq 0 \text{ and measurable} \right\}$
    of nonnegative measurable functions endowed with the pointwise order is an inf-semilattice.
  \item
    The Choquet integral is order preserving on $\POSET$ (see \cite[Proposition 2.3]{Ka2018}).
  \item
    As the capacity $c$ is countinuous from above, the following monotone
    pointwise convergence theorem holds (see \cite[Theorem 3.2.(2)]{Ka2018}): for
    every nonincreasing sequence $\left\{ x_n \right\}_{n\in \NN}$ of functions
    converging pointwise to $x \in \POSET$, we have that
    \[
      \winf_{n\in \NN} \ChoquetInt_{\Omega} x_n \dd c =
      \lim_{n\in \NN}\ChoquetInt_{\Omega} x_n \dd c =  \ChoquetInt_{\Omega} x
      \dd c
      \eqfinp
    \]
    As a consequence, the Choquet integral is sequentially-inf-continuous
    on~$\POSET$         (see Definition~\ref{def:sequentially-inf-continuous}).
  \end{itemize}

  Hence, by Theorem~\ref{th:Minimization_IT},
  given $X = \left\{ x_i \right\}_{i\in I} \subset \POSET$ a family of
  nonnegative functions with the countable inf property, we have
  \[
    \winf_{i\in I} \ChoquetInt_{\Omega} x_i \dd c = \ChoquetInt_{\Omega} \winf_{i \in I} x_i \dd c
  \]
  if, and only if, the subset~$X$ is Choquet integrably inf-directed.
\end{proof}

One could get a similar interchange result between Choquet integral and
maximization on subsets of nonpositive measurable functions by setting, for
every nonpositive measurable function~$x$,
\(  \ChoquetInt_{\Omega} x \dd c = - \ChoquetInt_{\Omega} \np{-x} \dd c \),
where the right hand side is the Choquet integral~\eqref{eq:ChoquetInt} for nonnegative measurable functions.

% \section{Conclusion and perspectives}
%
% \mdl{je propose de supprimer la conclusion}
%
% % We were initially interested in minimization of functions and interchange with mappings $\Phi$ which are not the integral.
% As said in the introduction, the question of interchanging integration and minimization is an important issue
% in stochastic optimization (where integration corresponds to mathematical expectation).
% Moreover, when the mathematical expectation is replaced with a risk measure,
% the question of interchange has been examined in \cite{Pi.Sc2019, Sh2017}.
% An important class of risk measures is made of suprema of integral expressions
% (in case of ``optimistic'' risk measures made of infima of integrals,
% the interchange with minimization would be obvious).
% % {This is why,} in Section~\ref{sec:MIT-main},
% % we started with an abstract result on interchange and minimization followed by an analysis of the integral case.
% There now remains to study under which additional conditions
% our abstract result would apply to suprema of integrals
% and to compare them with the existing literature.
\section{Conclusion}

% We were initially interested in minimization of functions and interchange with mappings $\Phi$ which are not the integral.
As recalled in the introduction, the question of interchanging integration and minimization is an important issue
in optimization (especially in stochastic optimization where integration corresponds to mathematical expectation or to risk measure). By using the framework of posets ---
and especially the notions of semilattice, $\Phi$-inf-directed subset, inf countable subset --- we have provided an umbrella theorem that covers a wide spectrum of results (and extends them). Moreover, our approach goes beyond integration and is able to handle more general monotone functionals.

\appendix

\section{Extended Lebesgue and outer integrals}
\label{sec:outer_int}

The set $\barRR = \RR \cup \na{+\infty} \cup \na{-\infty}$
is endowed with its Borel $\sigma$-algebra~(see %\cite[Application
% 4.2]{Br.Pa2012} or
\cite[Chap. II]{Ne1970}),
and with the following extended additions and multiplication.
We still denote by~$+$ the usual addition when extended to~$\barRR_+ = \RR \cup \na{+\infty} $
by $+\infty$ being absorbant, and to~$\barRR_- = \RR \cup \na{-\infty} $
by $-\infty$ being absorbant.
Then, we denote by~$\LowPlus$ the addition on~$\barRR$ for which
$-\infty$ is absorbant, \emph{i.e.}
$\np{+\infty} \LowPlus (-\infty) = \np{-\infty} \LowPlus \np{+\infty} =-\infty$ and by
$\UppPlus$ the addition for which $+\infty$ is absorbant, \emph{i.e.}
$\np{+\infty} \UppPlus \np{-\infty} = \np{-\infty} \UppPlus \np{+\infty} =
+\infty$.
We set \( \lambda \times \np{\pm\infty}= \pm\infty  \)
for \( \lambda \in ]0,+\infty[ \),
\( \lambda \times \np{\pm\infty}= \mp\infty  \)
for \( \lambda \in ]-\infty,0[ \), and
\( 0 \times \np{\pm\infty}= 0 \).
% \( 0 \times \np{+\infty}= 0 \times \np{-\infty}= 0 \).

Throughout this section, we fix a $\sigma$-finite measured space
$\np{\Omega, \tribu{F}, \mu}$.
The classical Lebesgue integral \wrt\ the $\sigma$-finite measure~$\mu$ is
defined for functions with values in~$\RR$ (real-valued functions).
As we are motivated by optimization, we need results for integrals of functions
with values in~$\barRR$ (extended real-valued functions).
For integration of measurable real-valued functions \wrt\ a $\sigma$-finite measure~$\mu$,
we refer the reader to~\cite[Chapter~11]{Al.Bo2006};
for integration of measurable extended real-valued functions \wrt\ a probability
measure~$\mu$, we refer the reader to~\cite{Ne1970};
for integration of measurable extended real-valued functions \wrt\ a $\sigma$-finite measure~$\mu$,
we refer the reader to~\cite[Chapter~V]{Halmos:1974};
for outer integration of extended real-valued functions \wrt\ a $\sigma$-finite measure~$\mu$,
we refer the reader to~\cite{Be.Sh1996}.

It happens that results about monotonicity, additivity, external multiplication
and monotone convergence of the integral are either scattered in the literature,
or sometimes not formulated.
This is due to the fact that the extension of the Lebesgue integral
to extended real-valued functions gives rise to different expressions,
which renders the exposition less systematic and elegant than with
the Lebesgue integral of integrable real-valued functions.
Also, some results belong to folklore and its is hard to find trace of their
proof, as they are considered obvious.
However, for the purpose of optimizing integral expressions,
% we need to delineate what are general conditions for monotone convergence. This
% is why,
we provide below a systematic exposition of the functional spaces $\Lpmu{0}{\barRR}$, $\Lpmu[\oplus]{1}{\barRR}$
and $\Lpmu[\ominus]{1}{\barRR}$, and how the Lebesgue integral can be extended.

\subsection{Functional space $\Lpmu{0}{\barRR}$ and the Lebesgue integral}

We endow the set~$\barRR^{\Omega}$ of functions $\fonctionun : \Omega \to \barRR$ with the
\emph{$\mu$-pointwise order}~$\leq$ as follows:
for any $\fonctionun ,\fonctiondeux \in \barRR^{\Omega}$,
\begin{equation}
  \fonctionun \leq \fonctiondeux \iff \exists A \in \tribu{F} \eqsepv
  \mu\np{A}=0 \eqsepv \fonctionun \np{\omega} \leq \fonctiondeux\np{\omega}
  \eqsepv \forall \omega \in \Omega \setminus A
  \eqfinp
  \label{eq:pointwise_order}
\end{equation}

We denote by $\mathcal{L}^0\np{\Omega,\tribu{F}; \barRR}$
the set of measurable functions
from $\Omega$ to $\barRR$ and by $\Lpmu{0}{\barRR}$ the quotient
$\mathcal{L}^0\np{ \Omega,\tribu{F};\barRR} /\sim $ where for any
$ \fonctionun, \fonctiondeux  \in \mathcal{L}^0\np{\Omega,\tribu{F}; \barRR}$,
$ \fonctionun \sim \fonctiondeux $ if, and only if, $ \fonctionun = \fonctiondeux $
$\mu$-almost everywhere.
The $\mu$-pointwise order~\eqref{eq:pointwise_order} induces an order on the set~$\Lpmu{0}{\barRR}$
of equivalence classes, that we will also denote by $\leq$ and call the
$\mu$-pointwise order. Thus, the expression \( \fonctionun \geq 0 \)
makes sense for \( \fonctionun \in \Lpmu{0}{\barRR} \).
In the same way, we introduce the \emph{$\mu$-pointwise strict order}~$<$
on the set~$\Lpmu{0}{\barRR}$  of equivalence classes:
\( \fonctionun < \fonctiondeux \iff \fonctionun \leq \fonctiondeux \)
and \( \fonctionun \neq \fonctiondeux \).
Thus, the expressions \( -\infty < \fonctionun \),
\( \fonctionun < +\infty \) and \( -\infty < \fonctionun  < +\infty \)
make sense for \( \fonctionun \in \Lpmu{0}{\barRR} \).

The set~$\Lpmu{0}{\barRR}$ is stable under the two additions~$\LowPlus$ or~$\UppPlus$,
and under external multiplication.
We say that a subset of~$\Lpmu{0}{\barRR}$ is a \emph{convex cone},
if it is stable under the addition~$+$ %two additions~$\LowPlus$ or~$\UppPlus$,
and under external multiplication by a scalar in~$\RR_+ $.

We write $\int$ for the Lebesgue integral deduced from the
$\sigma$-finite measured space $\np{\Omega, \tribu{F}, \mu}$.
The Lebesgue integral~$\int$ is defined on the convex cone
\begin{equation}
  \Lpmu[+]{0}{\barRR} = \bset{ \fonctionun \in \Lpmu{0}{\barRR}}{ \fonctionun \geq 0 }
  \eqfinv
\end{equation}
where it takes values in $\barRR_+$, given by the formula
(see \cite[Footnote~3, p.~411]{Al.Bo2006} for real-valued functions)
\begin{equation}
  \int \fonctionun
  =   \int \fonctionun \dd{\mu}
  = \sup \Bset{ \int_{\Omega} \varphi \dd{\mu}}
  { 0 \le \varphi \le \fonctionun \eqsepv \varphi \text{ simple and
      nonnegative}}
  \eqfinv
  \label{eq:sup-positive-simple}
\end{equation}
where simple nonnegative functions (or $\mu$-step functions) are functions of the form
$\varphi(\cdot) = \sum_{i\in I} \alpha_i\mathbf{1}_{A_i}(\cdot)$ with $I$
finite and $\na{A_i}_{i\in I}$ a sequence of measurable sets such that
$\mu(A_i) < +\infty$ for all $i\in I$ and the coefficients $\na{\alpha_i}_{i\in I}$ are
nonnegative and finite reals and the indicator function $\mathbf{1}_{A}$ of a subset of $\Omega$ is defined by
$\mathbf{1}_{A}(x)=1$ if $x\in A$ and $\mathbf{1}_{A}(x)=0$ if $x\not\in A$.

The (extended) Lebesgue integral on~$\Lpmu[+]{0}{\barRR}$ satisfies the following properties
\begin{itemize}
\item
  monotone:
  $\forall \fonctionun, \fonctiondeux \in \Lpmu[+]{0}{\barRR}$,
  \( \fonctionun \leq \fonctiondeux  \implies \int \fonctionun \leq \int \fonctiondeux  \),
\item
  additive:
  $\forall \fonctionun, \fonctiondeux \in \Lpmu[+]{0}{\barRR}$,
  \( \int \np{ \fonctionun + \fonctiondeux }
  = \int \fonctionun + \int \fonctiondeux  \),
\item
  positively homogeneous:
  $\forall \fonctionun \in \Lpmu[+]{0}{\barRR}$,
  $\forall \lambda \in \RR_+$,
  \( \int \np{\lambda f} = \lambda \int \fonctionun \),
\item
  monotone convergence:
  for any nondecreasing sequence $\np{\fonctionun_n}_{n\in \NN}$
  in~$\Lpmu[+]{0}{\barRR}$, then
  \( \fonctionun = \sup_{n\in \NN} \fonctionun_n \in \Lpmu[+]{0}{\barRR} \)
  and
  \( \lim_{n\to +\infty} \int\fonctionun_n= \int\fonctionun \).
\end{itemize}

\subsection{Functional spaces $\Lpmu[\oplus]{1}{\barRR}$,   $\Lpmu[\ominus]{1}{\barRR}$ and the extended Lebesgue integral}
\label{sec:l1p}
For any function $ \fonctionun : \Omega \to \barRR$,
we define its \emph{positive part}
$ \fonctionun_+ = \sup\np{0,  \fonctionun}$
and its \emph{negative part} $ \fonctionun_- = \sup\np{0, - \fonctionun}$.
Obviously, we have \( \fonctionun = \fonctionun_+ +
\np{ -\fonctionun_- } \) (where we use the addition~$+$ as one of the terms is
zero for any value taken by the argument of the function~$\fonctionun$).
We define the set
\begin{subequations}
  \begin{align}
    \mathcal{L}^1_{\oplus}\np{\Omega,\tribu{F};\barRR}
    & =
      \Bset{  \fonctionun\in \mathcal{L}^0\np{\Omega,\tribu{F}; \barRR}}{\int_{\Omega}  \fonctionun_+
      \dd{\mu} < +\infty}
      \eqfinv
      \intertext{and the quotient set $\mathcal{L}^1_{\oplus}\np{\Omega;
      \barRR}\backslash\sim$ by}
      \label{eq:int_L1rondeplus}
      \Lpmu[\oplus]{1}{\barRR}
    & =
      \Bset{  \fonctionun\in \Lpmu{0}{\barRR}}{\int_{\Omega} \fonctionun_+ \dd{\mu} < +\infty}
      \eqfinv
  \end{align}
  with the property that
  \begin{equation}
    \fonctionun\in \Lpmu[\oplus]{1}{\barRR} \implies
    \fonctionun < +\infty
  \end{equation}
\end{subequations}
because \( \int_{\Omega} \fonctionun_+ \dd{\mu} < +\infty
\implies \fonctionun_+ <  +\infty  \implies
\fonctionun \leq \fonctionun_+ <  +\infty \).
% in the sense that \( \mu\na{\fonctionun = +\infty}=0 \).
%
In the same way, we define
\begin{subequations}
  \begin{align}
    \mathcal{L}^1_{\ominus}\np{\Omega,\tribu{F};\barRR}
    & =
      \Bset{  \fonctionun\in \mathcal{L}^0\np{\Omega,\tribu{F}; \barRR}}{\int_{\Omega}  \fonctionun_-
      \dd{\mu} < +\infty}
      \eqfinv
    \\
    \Lpmu[\ominus]{1}{\barRR}
    & =
      \Bset{  \fonctionun\in \Lpmu{0}{\barRR}}{\int_{\Omega}  \fonctionun_- \dd{\mu} < +\infty}
      \eqfinv
  \end{align}
\end{subequations}
with the properties that
\( \Lpmu[\ominus]{1}{\barRR}= -\Lpmu[\oplus]{1}{\barRR} \) and that
\( \fonctionun\in \Lpmu[\ominus]{1}{\barRR}\)\( \implies
-\infty < \fonctionun \).

% Hence, if $ \fonctionun \in \Lpmu[\oplus]{1}{\barRR}$, then $(-f) \in
% \Lpmu[\ominus]{1}{\barRR}$.

% These four sets are convex cones, as they are stable under the two additions~$\LowPlus$ or~$\UppPlus$,
% and under external multiplication by a scalar
% in~$\barRR_+ = \RR \cup \na{+\infty} $.

We say that a (class of) function(s) $ \fonctionun \in \Lpmu{0}{\barRR} $ is \emph{semi-integrable}
if it belongs to \( \Lpmu[\oplus]{1}{\barRR} \cup
\Lpmu[\ominus]{1}{\barRR} \), that is, if either
\( \int_{\Omega}  \fonctionun_+ \dd{\mu} < +\infty \) or
\( \int_{\Omega}  \fonctionun_- \dd{\mu} < +\infty \).
The Lebesgue integral is extended from the convex cone~$\Lpmu[+]{0}{\barRR}$
to semi-integrable functions by
(\cite[Proposition~II-3-2]{Ne1970}, \cite[Chapter~11]{Al.Bo2006}, \cite[Chapter~V]{Halmos:1974})
% the convex cones~$\Lpmu[\oplus]{1}{\barRR}$ and~$\Lpmu[\ominus]{1}{\barRR}$ by
\begin{equation}
  \int \fonctionun = \int \fonctionun_{+} +
  \Bp{ -\int \fonctionun_{-} } \eqsepv
  \forall \fonctionun \in \Lpmu[\oplus]{1}{\barRR} \cup
  \Lpmu[\ominus]{1}{\barRR}
  \eqfinv
  \label{eq:semi-integrable_Lebesgue_integral}
  % \label{eq:quasi-integrable}
\end{equation}
where we use the addition~$+$ as one of the terms is zero.
The extended Lebesgue integral on semi-integrable functions satisfies the following
properties (listed in~\cite[Proposition II-3-3]{Ne1970})
\begin{itemize}
\item
  monotone:
  $\forall \fonctionun, \fonctiondeux \in \Lpmu[\oplus]{1}{\barRR} \cup
  \Lpmu[\ominus]{1}{\barRR} $,
  \( \fonctionun \leq \fonctiondeux  \implies \int \fonctionun \leq \int \fonctiondeux  \),
\item
  additive on~\( \Lpmu[\oplus]{1}{\barRR} \):
  $\forall \fonctionun, \fonctiondeux \in \Lpmu[\oplus]{1}{\barRR}$,
  \( \int \np{ \fonctionun + \fonctiondeux }
  = \int \fonctionun + \int \fonctiondeux  \),
\item
  additive on~\( \Lpmu[\ominus]{1}{\barRR} \):
  $\forall \fonctionun, \fonctiondeux \in \Lpmu[\ominus]{1}{\barRR}$,
  \( \int \np{ \fonctionun + \fonctiondeux }
  = \int \fonctionun + \int \fonctiondeux  \),
\item
  positively and negatively homogeneous:
  $\forall \fonctionun \in \Lpmu[\oplus]{1}{\barRR} \cup
  \Lpmu[\ominus]{1}{\barRR} $,
  $\forall \lambda \in \RR$,
  \( \int \np{\lambda f} = \lambda \int \fonctionun \),
\item
  monotone convergence on~$\Lpmu[\oplus]{1}{\barRR}$:
  for any nonincreasing sequence $\np{\fonctionun_n}_{n\in \NN}$
  in~$\Lpmu[\oplus]{1}{\barRR}$, then
  \( \fonctionun = \inf_{n\in \NN} \fonctionun_n \in \Lpmu[\oplus]{1}{\barRR} \)
  and
  \( \fonctionun_n \downarrow \fonctionun \) and
  \( \lim_{n\to +\infty} \int\fonctionun_n= \int\fonctionun \),
\item
  monotone convergence on~$\Lpmu[\ominus]{1}{\barRR}$:
  for any nondecreasing sequence $\np{\fonctionun_n}_{n\in \NN}$
  in~$\Lpmu[\ominus]{1}{\barRR}$, then
  \( \fonctionun = \sup_{n\in \NN} \fonctionun_n \in \Lpmu[\ominus]{1}{\barRR} \)
  and
  \( \fonctionun_n \uparrow \fonctionun \) and
  \( \lim_{n\to +\infty} \int\fonctionun_n= \int\fonctionun \).
\end{itemize}

% if $\fonctionun_n \uparrow \fonctionun$ the $\int_{\Omega} \fonctionun_n \dd{\mu} \uparrow \int_{\Omega} \fonctionun \dd{\mu}$ if $\fonctionun_n_{-}$ is integrable for at least one $n\in \NN$.
% if $\fonctionun_n \downarrow \fonctionun$ the $\int_{\Omega} \fonctionun_n \dd{\mu} \downarrow \int_{\Omega} \fonctionun \dd{\mu}$ if $\fonctionun_n_{+}$ is integrable for at least one $n\in \NN$.

We provide some of the proofs.

\begin{lemma}%[Stability of $\stablpmu$ by addition and additivity of the integral]
  \label{lemma:stab_addition_negativepartint}
  For any functions $ \fonctionun$ and $ \fonctiondeux $ in $\Lpmu[\ominus]{1}{\barRR}$, we have
  % Let $ \fonctionun$ and $ \fonctiondeux $ be two elements of
  % $\Lpmu[\ominus]{1}{\barRR}$. Then,
  $ \fonctionun + \fonctiondeux  \in \Lpmu[\ominus]{1}{\barRR}$ and
  \begin{equation*}
    \int_{\Omega} \np{ \fonctionun + \fonctiondeux } \dd{\mu}
    = \int_{\Omega} \fonctionun  \dd{\mu} + \int_{\Omega} \fonctiondeux  \dd{\mu}
    \eqsepv
    \forall \fonctionun \in \Lpmu[\ominus]{1}{\barRR} \eqsepv
    \fonctiondeux  \in \Lpmu[\ominus]{1}{\barRR}
    \eqfinp
  \end{equation*}
\end{lemma}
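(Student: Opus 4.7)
The plan is to prove both the stability $f+g \in \Lpmu[\ominus]{1}{\barRR}$ and the additivity statement by reducing everything to the Lebesgue integral on the convex cone $\Lpmu[+]{0}{\barRR}$, on which additivity, monotonicity and the monotone convergence property have already been stated in the excerpt.

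First I would check that the sum $f+g$ is unambiguously defined $\mu$-almost everywhere. Since $f,g \in \Lpmu[\ominus]{1}{\barRR}$, we have $\int f_{-}\dd\mu < +\infty$ and $\int g_{-}\dd\mu < +\infty$, whence $f_{-} < +\infty$ and $g_{-} < +\infty$ $\mu$-a.e., so $f > -\infty$ and $g > -\infty$ $\mu$-a.e. On this full-measure set, neither the ordinary addition $+$ nor the $\LowPlus$/$\UppPlus$ variants yield an indeterminate form, and the pointwise sum $f+g$ is well-defined with values in $\barRR_{-}^{c} = \RR \cup \{+\infty\}$.

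Next I would establish stability. The elementary pointwise inequality $(a+b)_{-} \leq a_{-} + b_{-}$, valid on any full-measure set on which $f$ and $g$ are finite or equal to $+\infty$, gives $(f+g)_{-} \leq f_{-} + g_{-}$ $\mu$-a.e. Invoking monotonicity and additivity of the Lebesgue integral on the convex cone $\Lpmu[+]{0}{\barRR}$ (listed among the properties just above the lemma), we obtain
\[
\int (f+g)_{-}\dd\mu \leq \int f_{-}\dd\mu + \int g_{-}\dd\mu < +\infty,
\]
which proves that $f+g \in \Lpmu[\ominus]{1}{\barRR}$.

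For the additivity statement, I would start from the pointwise identity, valid $\mu$-a.e. in $\barRR_{+}$,
\[
(f+g)_{+} + f_{-} + g_{-} = (f+g)_{-} + f_{+} + g_{+},
\]
obtained by rewriting $f+g = f_{+} - f_{-} + g_{+} - g_{-} = (f+g)_{+} - (f+g)_{-}$ and moving the minus signs to the other side (no indeterminate form, as both sides are nonnegative extended real-valued). Applying additivity of the Lebesgue integral on $\Lpmu[+]{0}{\barRR}$ to both sides gives an equality in $\barRR_{+}$; since $\int f_{-}\dd\mu$, $\int g_{-}\dd\mu$ and $\int (f+g)_{-}\dd\mu$ are all finite, I can subtract them to obtain
\[
\int (f+g)_{+}\dd\mu - \int (f+g)_{-}\dd\mu = \Bp{\int f_{+}\dd\mu - \int f_{-}\dd\mu} + \Bp{\int g_{+}\dd\mu - \int g_{-}\dd\mu},
\]
which is precisely $\int (f+g)\dd\mu = \int f\dd\mu + \int g\dd\mu$ in the sense of the extended Lebesgue integral~\eqref{eq:semi-integrable_Lebesgue_integral}.

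The main obstacle is purely bookkeeping: one must check carefully that each addition in the argument (pointwise and of integrals) avoids the indeterminate form $(+\infty)\LowPlus(-\infty)$. This is exactly why we must first use the a.e.\ finiteness of $f_{-}$ and $g_{-}$ to justify the pointwise identity, and then use the \emph{finiteness} of $\int f_{-}\dd\mu$, $\int g_{-}\dd\mu$, $\int (f+g)_{-}\dd\mu$ to legitimately move them across the equality sign, even when $\int f_{+}\dd\mu$ or $\int g_{+}\dd\mu$ equals $+\infty$ (in which case both sides of the final equality are $+\infty$ and the conclusion still holds).
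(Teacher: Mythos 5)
Your proposal is correct and follows essentially the same route as the paper: establish a.e.\ finiteness from below, prove stability via the pointwise bound $(f+g)_{-} \leq f_{-} + g_{-}$ together with monotonicity and additivity of the integral on the nonnegative cone, then derive additivity from the rearranged identity $(f+g)_{+} + f_{-} + g_{-} = f_{+} + g_{+} + (f+g)_{-}$ and subtraction of the three finite negative-part integrals. The bookkeeping about which extended addition is legitimate at each step is exactly the care the paper's own proof takes.
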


\begin{proof}
  We consider $ \fonctionun, \fonctiondeux \in \Lpmu[\ominus]{1}{\barRR}$.
  Notice that, as $ \fonctionun, \fonctiondeux \in \Lpmu[\ominus]{1}{\barRR}$,
  we have that \( -\infty < \fonctionun \) and \( -\infty < \fonctiondeux \),
  so that we will use the addition~$+$.

  \noindent $\bullet$
  We show that $\int_{\Omega} \np{ \fonctionun + \fonctiondeux } \dd{\mu} < +\infty$.
  On the one hand, we have
  \[
    \np{ \fonctionun + \fonctiondeux }_-
    = \sup\bp{0, -\np{ \fonctionun + \fonctiondeux }}
    = \sup\bp{0, (- \fonctionun ) + (- \fonctiondeux )}
    \eqfinp
  \]
  On the other hand, we have
  \( (- \fonctionun ) \leq  \fonctionun_{-} \) and
  \( (- \fonctiondeux ) \leq \fonctiondeux_{-} \),
  hence
  $(- \fonctionun ) + (- \fonctiondeux ) \leq  \fonctionun_{-} +
  \fonctiondeux_{-}$
  and thus
  \(  \np{ \fonctionun + \fonctiondeux }_-
  \leq  \fonctionun_{-} + \fonctiondeux_{-}  \).
  By monotonicity and additivity of the Lebesgue integral on
  $\Lpmu[+]{0}{\barRR}$, we deduce that
  \begin{equation*}
    \int_{\Omega} \np{ \fonctionun + \fonctiondeux }_- \dd{\mu}
    \leq \int_{\Omega}  \fonctionun_{-}\dd{\mu} + \int_{\Omega}
    \fonctiondeux_{-}\dd{\mu} < +\infty
    \eqfinv
  \end{equation*}
  because \( \int_{\Omega}  \fonctionun_{-}\dd{\mu}  < +\infty \) and
  \( \int_{\Omega} \fonctiondeux_{-}\dd{\mu} < +\infty \)
  by assumption ($ \fonctionun, \fonctiondeux \in \Lpmu[\ominus]{1}{\barRR}$).
  Hence, $ \fonctionun+ \fonctiondeux  \in \Lpmu[\ominus]{1}{\RR}$.
  \medskip

  \noindent $\bullet$
  We prove the additivity of the integral.
  Notice that, as $ \fonctionun, \fonctiondeux, \fonctionun+\fonctiondeux
  \in \Lpmu[\ominus]{1}{\barRR}$,
  we have that \( -\infty < \fonctionun \) and \( -\infty < \fonctiondeux \),
  and also that
  \( 0 \leq \fonctionun_{-} < +\infty \),
  \( 0 \leq \fonctiondeux_{-} < +\infty \),
  \( 0 \leq \np{\fonctionun+\fonctiondeux}_{-} < +\infty \),
  \( 0 \leq \int_{\Omega} \fonctionun_{-}\dd{\mu} < +\infty \),
  \( 0 \leq \int_{\Omega} \fonctiondeux_{-}\dd{\mu} < +\infty \),
  \( 0 \leq \int_{\Omega} \np{\fonctionun+\fonctiondeux}_{-} < +\infty \),
  so that we will use the addition~$+$.

  As, for any function \( \fonctiontrois \), we have that
  \( \fonctiontrois = \fonctiontrois_{+} + (- \fonctiontrois_{-}) \)
  (where we use the addition~$+$ as one of the terms is zero), we
  immediately get that
  \[
    \np{ \fonctionun  + \fonctiondeux }_+ +
    \bp{-\np{\fonctionun + \fonctiondeux}_-}
    =  \fonctionun + \fonctiondeux
    =  \fonctionun_{+} + (- \fonctionun_{-}) +
    \fonctiondeux_{+} + (- \fonctiondeux_{-})
    \eqfinp
  \]
  Now, if we add, to the left and right hand side of the above equality,
  the three nonnegative reals
  \( \np{\fonctionun + \fonctiondeux}_- \),
  $\fonctionun_{-}$ and $\fonctiondeux_{-}$
  (none of them being $+\infty$), we obtain the equality
  \begin{equation*}
    (f + \fonctiondeux )_+ +  \fonctionun_{-} + \fonctiondeux_{-}
    =   \fonctionun_{+} + \fonctiondeux_{+} + ( \fonctionun  + \fonctiondeux )_-
    \eqfinp
  \end{equation*}
  As this is an equality between sums of nonnegative functions,
  we apply the Lebesgue integral on $\Lpmu[+]{0}{\barRR}$, and get that
  \begin{equation*}
    \int_{\Omega}( \fonctionun  + \fonctiondeux )_+\dd{\mu} + \int_{\Omega}
    \fonctionun_{-}\dd{\mu} + \int_{\Omega} \fonctiondeux_{-}\dd{\mu} =
    \int_{\Omega} \fonctionun_{+}\dd{\mu} + \int_{\Omega}
    \fonctiondeux_{+}\dd{\mu} + \int_{\Omega}( \fonctionun  + \fonctiondeux
    )_-\dd{\mu}
    \eqfinv
  \end{equation*}
  by additivity of the Lebesgue integral on $\Lpmu[+]{0}{\barRR}$.
  Now, the quantities $\int_{\Omega} \fonctionun_{-}\dd{\mu}$,
  $\int_{\Omega} \fonctiondeux_{-}\dd{\mu}$ and $\int_{\Omega}( \fonctionun  +
  \fonctiondeux )_-\dd{\mu}$ are three nonnegative reals
  (none of them being $+\infty$) by assumption
  ($ \fonctionun, \fonctiondeux \in \Lpmu[\ominus]{1}{\barRR}$
  and property $\fonctionun+\fonctiondeux\in \Lpmu[\ominus]{1}{\barRR}$).
  Thus, we get, by subtracting these three finite terms,
  \[
    \begin{split}
      \int_{\Omega}( \fonctionun  + \fonctiondeux )_+\dd{\mu} +
      \bp{- \int_{\Omega}( \fonctionun  + \fonctiondeux )_-\dd{\mu}} \\
      = \int_{\Omega} \fonctionun_{+}\dd{\mu} + \bp{-\int_{\Omega} \fonctionun_{-}\dd{\mu}}
      + \int_{\Omega} \fonctiondeux_+\dd{\mu} + \bp{-\int_{\Omega} \fonctiondeux_{-}\dd{\mu}},
    \end{split}
  \]
  hence, by~\eqref{eq:semi-integrable_Lebesgue_integral},
  \begin{equation*}
    \int_{\Omega}( \fonctionun  + \fonctiondeux ) \dd{\mu} =
    \int_{\Omega} \fonctionun  \dd{\mu} + \int_{\Omega} \fonctiondeux
    \dd{\mu}
    \eqfinp
  \end{equation*}
  This ends the proof.
\end{proof}

\begin{lemma}
  \label{lemma:minus_Lebesgue_integral}
  We have
  \begin{equation*}
    \int_{\Omega} \np{ -\fonctionun } \dd{\mu} = - \int_{\Omega} \fonctionun
    \dd{\mu}
    \eqsepv
    \forall \fonctionun \in \Lpmu[\oplus]{1}{\barRR} \cup
    \Lpmu[\ominus]{1}{\barRR}
    \eqfinp
    \label{eq:minus_Lebesgue_integral}
  \end{equation*}
\end{lemma}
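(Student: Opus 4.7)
The plan is to reduce the identity to the definition~\eqref{eq:semi-integrable_Lebesgue_integral} of the extended Lebesgue integral on semi-integrable functions, using the elementary observation that negation swaps positive and negative parts: \( \np{-\fonctionun}_{+} = \fonctionun_{-} \) and \( \np{-\fonctionun}_{-} = \fonctionun_{+} \), which is immediate from the definitions \( \fonctionun_{+} = \sup\np{0,\fonctionun} \) and \( \fonctionun_{-} = \sup\np{0,-\fonctionun} \).

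First, I would use this observation to conclude that \( \fonctionun \in \Lpmu[\oplus]{1}{\barRR} \) if and only if \( -\fonctionun \in \Lpmu[\ominus]{1}{\barRR} \); hence if \( \fonctionun \) is semi-integrable so is \( -\fonctionun \), and both sides of the claimed equality are well-defined elements of \( \barRR \). Second, applying formula~\eqref{eq:semi-integrable_Lebesgue_integral} to \( -\fonctionun \) and to \( \fonctionun \) separately, the equality to prove becomes
\[
\int_{\Omega} \fonctionun_{-} \dd{\mu} + \Bp{-\int_{\Omega} \fonctionun_{+} \dd{\mu}}
= - \Bp{\int_{\Omega} \fonctionun_{+} \dd{\mu} + \bp{-\int_{\Omega} \fonctionun_{-} \dd{\mu}}},
\]
an identity in \( \barRR \) involving only the two quantities \( a = \int_{\Omega} \fonctionun_{+} \dd{\mu} \) and \( b = \int_{\Omega} \fonctionun_{-} \dd{\mu} \), at least one of which is finite by semi-integrability.

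The main (and essentially only) subtlety will be verifying this last identity \( b + \np{-a} = -\bp{a + \np{-b}} \) case by case according to the conventions fixed at the start of Appendix~\ref{sec:outer_int} for the addition \( + \) extended to \( \barRR_{+} \) and to \( \barRR_{-} \) and for \( -\np{\pm\infty} \). When \( a,b \in \RR_{+} \) it is an identity in \( \RR \); when exactly one of them equals \( +\infty \), one must track which of the two additions is in force on each side (the left-hand side uses the extension to \( \barRR_{-} \) where \( -\infty \) is absorbant, while the right-hand side first uses the extension to \( \barRR_{+} \) where \( +\infty \) is absorbant and then negates via \( -\np{+\infty} = -\infty \)) and check that both sides evaluate to the same infinite value. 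I expect no difficulty beyond this bookkeeping, and the two cases \( \fonctionun \in \Lpmu[\oplus]{1}{\barRR} \) and \( \fonctionun \in \Lpmu[\ominus]{1}{\barRR} \) are in fact symmetric under the substitution \( \fonctionun \leftrightarrow -\fonctionun \), so only one needs to be written out explicitly.
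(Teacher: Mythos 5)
Your proposal is correct and follows exactly the same route as the paper's proof, which simply invokes formula~\eqref{eq:semi-integrable_Lebesgue_integral} together with the identities \( \np{-\fonctionun}_+ = \fonctionun_- \) and \( \np{-\fonctionun}_- = \fonctionun_+ \) and calls the rest obvious. Your extra bookkeeping on the extended arithmetic is just a more explicit rendering of what the paper leaves implicit.
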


\begin{proof}
  This is an obvious consequence of~\eqref{eq:semi-integrable_Lebesgue_integral},
  and of \( \np{-\fonctionun}_+ = \fonctionun_- \)
  and \( \np{-\fonctionun}_- = \fonctionun_+ \).
  % \begin{equation}
  %   \fonctionun \in \Lpmu[\ominus]{1}{\barRR} \implies
  %   -\fonctionun \in \Lpmu[\oplus]{1}{\barRR} \mtext{ and }
  %   -\int\fonctionun d\mu =\int \np{-\fonctionun} d\mu
  %   \eqfinv
  %   \label{eq:minus_Lebesgue_integral}
  % \end{equation}
\end{proof}

\begin{proposition}[Extended monotone convergence theorem for $\stablpmu$]
  \label{MCT_in_L1moins}
  Let $\np{\fonctionun_n}_{n\in \NN}$ be an nonincreasing sequence of functions
  in~$\Lpmu[\oplus]{1}{\barRR}$, converging to $\fonctionun \in \barRR^{\Omega}$, that
  is, \( \fonctionun_n \downarrow \fonctionun \).
  Then, $\fonctionun \in \Lpmu[\oplus]{1}{\barRR}$ and we have that
  \begin{equation}
    \lim_{n\to +\infty} \int\fonctionun_n d\mu  = \int\fonctionun d\mu
    \eqfinp
    \label{eq:extended_monotone_convergence_theorem}
  \end{equation}
\end{proposition}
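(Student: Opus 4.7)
The plan is to reduce the statement to the classical monotone convergence theorem for the Lebesgue integral on the convex cone~$\Lpmu[+]{0}{\barRR}$ by splitting every function into its positive and negative parts. The key structural observation is that because the sequence $\np{\fonctionun_n}_{n\in\NN}$ is nonincreasing, the sequence of positive parts $\np{(\fonctionun_n)_+}_{n\in\NN}$ is nonincreasing and converges pointwise to $\fonctionun_+$, while the sequence of negative parts $\np{(\fonctionun_n)_-}_{n\in\NN}$ is nondecreasing and converges pointwise to $\fonctionun_-$.

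First, I would establish that $\fonctionun \in \Lpmu[\oplus]{1}{\barRR}$. Since $\fonctionun_n \downarrow \fonctionun$ and $\fonctionun_n \leq \fonctionun_0$ for all $n$, we get $\fonctionun \leq \fonctionun_0$, hence $\fonctionun_+ \leq (\fonctionun_0)_+$. As $\fonctionun_0 \in \Lpmu[\oplus]{1}{\barRR}$, we have $\int (\fonctionun_0)_+\dd\mu < +\infty$, and by monotonicity of the Lebesgue integral on~$\Lpmu[+]{0}{\barRR}$, we conclude $\int \fonctionun_+ \dd\mu < +\infty$, hence $\fonctionun \in \Lpmu[\oplus]{1}{\barRR}$.

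Second, I would handle the two halves separately. For the negative parts, the classical monotone convergence theorem on~$\Lpmu[+]{0}{\barRR}$ directly yields $\int (\fonctionun_n)_- \dd\mu \uparrow \int \fonctionun_- \dd\mu$ in $\barRR_+$. For the positive parts, the sequence $\np{(\fonctionun_n)_+}$ is nonincreasing and bounded above by $(\fonctionun_0)_+$, whose integral is finite; I would therefore apply MCT to the nondecreasing nonnegative sequence $h_n = (\fonctionun_0)_+ - (\fonctionun_n)_+$, which is well defined pointwise because $(\fonctionun_0)_+ < +\infty$, and which increases to $(\fonctionun_0)_+ - \fonctionun_+$. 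Using the finiteness of $\int (\fonctionun_0)_+\dd\mu$ and additivity of the integral on $\Lpmu[+]{0}{\barRR}$, I can subtract to conclude $\int (\fonctionun_n)_+ \dd\mu \downarrow \int \fonctionun_+ \dd\mu$.

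Finally, I would combine the two limits via the defining formula~\eqref{eq:semi-integrable_Lebesgue_integral}, namely $\int \fonctionun_n\dd\mu = \int (\fonctionun_n)_+\dd\mu + \bp{-\int (\fonctionun_n)_-\dd\mu}$, and similarly for~$\fonctionun$. The main obstacle here is the bookkeeping with the extended addition: the quantity $\int (\fonctionun_n)_+\dd\mu$ is always a finite nonnegative real, but $\int (\fonctionun_n)_-\dd\mu$ may equal $+\infty$ (either for some finite $n$ or only in the limit). I would split into the two cases according to whether $\int \fonctionun_-\dd\mu$ is finite or equals $+\infty$: in the first case, every $\int (\fonctionun_n)_-\dd\mu$ is finite and the limit is just the difference of limits in $\RR$; in the second case, either $\int (\fonctionun_n)_-\dd\mu = +\infty$ eventually (giving $\int \fonctionun_n\dd\mu = -\infty$ from some rank on) or $\int (\fonctionun_n)_-\dd\mu \uparrow +\infty$ while $\int (\fonctionun_n)_+\dd\mu$ stays bounded, so that $\int \fonctionun_n\dd\mu \to -\infty = \int \fonctionun\dd\mu$. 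In both cases the desired equality~\eqref{eq:extended_monotone_convergence_theorem} holds, completing the proof.
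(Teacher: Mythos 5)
Your proof is correct, and it takes a genuinely different route from the paper's. You reduce everything to the nonnegative cone $\Lpmu[+]{0}{\barRR}$: you split each $\fonctionun_n$ into positive and negative parts, note that $\np{\fonctionun_n}_+\downarrow \fonctionun_+$ while $\np{\fonctionun_n}_-\uparrow \fonctionun_-$, handle the increasing half by the classical monotone convergence theorem, handle the decreasing half by applying that same theorem to $h_n=\np{\fonctionun_0}_+-\np{\fonctionun_n}_+$ and subtracting the finite quantity $\int\np{\fonctionun_0}_+\dd{\mu}$, and then recombine through~\eqref{eq:semi-integrable_Lebesgue_integral} with a case split on whether $\int\fonctionun_-\dd{\mu}$ is finite. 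The paper instead keeps the functions whole: it first disposes of the case $\int\np{\fonctionun_1}_-\dd{\mu}=+\infty$, where every $\int\fonctionun_n\dd{\mu}$ and $\int\fonctionun\dd{\mu}$ equal $-\infty$; in the remaining case $\fonctionun_1$ is integrable and real-valued, so it forms the nonpositive nonincreasing sequence $\varphi_n=\fonctionun_n+\np{-\fonctionun_1}$, applies monotone convergence for nonpositive functions, and adds $\int\fonctionun_1\dd{\mu}$ back using the additivity of the extended integral on $\Lpmu[\oplus]{1}{\barRR}$. Your version relies only on the monotone convergence and additivity properties of the integral on $\Lpmu[+]{0}{\barRR}$ (the most elementary ingredients) at the cost of slightly heavier bookkeeping with the extended addition at the end; the paper's version trades that bookkeeping for a dependence on the additivity lemma for semi-integrable functions established earlier in the appendix. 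Both arguments are sound and of comparable length.
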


\begin{proof}
  Let $\np{\fonctionun_n}_{n\in \NN}$ be an nonincreasing sequence of functions
  in~$\Lpmu[\oplus]{1}{\barRR}$, such that
  \( \fonctionun_n \downarrow \fonctionun \).
  As $ \fonctionun_n \in \Lpmu[\oplus]{1}{\barRR}$,
  we have that \( \fonctionun_n < +\infty \) for all $n \in \NN$, so that we will use the addition~$+$.

  % All functions are in \( \Lpmu{0}{\barRR} \).
  As \( \fonctionun \leq \fonctionun_1 \), we have that
  \( \sup\np{0,\fonctionun} =\fonctionun_{+} \leq \np{\fonctionun_1}_{+}=\sup\np{0,\fonctionun_1} \), hence
  \( \int\fonctionun_{+} d\mu \leq \int\np{\fonctionun_1}_{+} d\mu <+\infty \),
  where the last strict inequality is by assumption
  ($ \fonctionun_1 \in \Lpmu[\oplus]{1}{\barRR}$).
  We conclude that \( \fonctionun \in \Lpmu[\oplus]{1}{\barRR} \).

  As, by assumption, \( \int\np{\fonctionun_1}_{+} d\mu <+\infty \),
  we conclude that \( \np{\fonctionun_1}_{+} <+\infty \).
  % , except on a set of  $\mu$-measure zero.
  We consider two cases.

  We suppose that \( \int\np{\fonctionun_1}_{-} d\mu =+\infty \).
  As \(  \sup\np{0,-\fonctionun} =\fonctionun_{-} \geq
  \np{\fonctionun_1}_{-}=\sup\np{0,-\fonctionun_1} \), we also have that
  \( \int\fonctionun_{-} d\mu =+\infty \).
  As a consequence, we get that
  \( \int\fonctionun_{-} d\mu = \int\np{\fonctionun_1}_{-} d\mu =+\infty \),
  hence \( \int\fonctionun d\mu = \int\fonctionun_1 d\mu =+\infty \),
  by definition of the integral~$\int$ on~\( \Lpmu[\oplus]{1}{\barRR} \).
  By monotonicity of the integral~$\int$, we conclude that
  \( +\infty=\int\fonctionun_1 d\mu \leq
  \lim_{n\to +\infty} \int\fonctionun_n d\mu \leq
  \int\fonctionun_1 d\mu =+\infty \),
  hence that~\eqref{eq:extended_monotone_convergence_theorem} holds true.

  We now suppose that \( \int\np{\fonctionun_1}_{-} d\mu <+\infty \).
  We deduce that \( \np{\fonctionun_1}_{-} <+\infty \).
  % , except on a set of  $\mu$-measure zero.
  As we had \( \np{\fonctionun_1}_{+} <+\infty \),
  % , except on a set of  $\mu$-measure zero,
  we deduce that \( -\infty < \fonctionun_1 <+\infty \).
  % , except on the union  of two sets of $\mu$-measure zero.
  % From now on, we ignore this latter set, which is of $\mu$-measure zero.
  Thus, we can define
  \( \varphi_n =  \fonctionun_n + \np{-\fonctionun_1} \)
  and \( \varphi= \fonctionun + \np{-\fonctionun_1} \),
  which are functions in \( \Lpmu{0}{\barRR} \) such that
  \(
  \varphi= \fonctionun + \np{-\fonctionun_1}
  % = \fonctionun - \fonctionun_1
  \leq
  \varphi_n =  \fonctionun_n + \np{-\fonctionun_1}
  \leq
  0 \),
  % = \fonctionun_n - \fonctionun_1
  because \( \fonctionun_n \leq \fonctionun_1 \). %\( \fonctionun_1 \in \RR \).
  As \( \fonctionun_1 \) takes values in~\( \RR \), we have that
  \( \inf_{n} \np{\fonctionun_n +  \np{-\fonctionun_1} }
  =  \inf_{n} \fonctionun_n + \np{-\fonctionun_1} \),
  % by property of the upper Moreau addition,
  hence we obtain that
  \( \varphi_n \downarrow \varphi\). As \( \varphi_n \leq 0 \),
  by the monotone convergence theorem for ($\Lpmu[-]{0}{\barRR}$, $\int$),
  we get that
  \[
    \inf_{n} \int \varphi_n d\mu % \np{\fonctionun_n- \fonctionun_1} d\mu
    = \lim_{n\to +\infty} \int \varphi_n d\mu % \np{\fonctionun_n- \fonctionun_1} d\mu
    = \int \varphi d\mu % \np{\fonctionun - \fonctionun_1} d\mu
    \eqfinp
  \]
  As, by assumption, \( \int \np{\fonctionun_1}_{-} d\mu <+\infty \) and
  \( \int\np{\fonctionun_1}_{+} d\mu <+\infty \), we get that
  \( \fonctionun_1 \in  \Lpmu{1}{\barRR} \) and that
  \(  -\infty < \int\fonctionun_1 d\mu <+\infty \), hence obtaining
  % By property of the upper Moreau addition, we get that
  \[
    % \lim_{n\to +\infty} \bp{ \int \varphi_n d\mu % \np{\fonctionun_n- \fonctionun_1} d\mu
    \inf_{n} \bp{ \int \varphi_n d\mu % \np{\fonctionun_n- \fonctionun_1} d\mu
      + \int\fonctionun_1 d\mu }
    =
    \inf_{n} \int \varphi_n d\mu % \np{\fonctionun_n- \fonctionun_1} d\mu
    + \int\fonctionun_1 d\mu
    =
    \int \varphi d\mu % \np{\fonctionun - \fonctionun_1} d\mu
    + \int\fonctionun_1 d\mu
    \eqfinp
  \]
  As \( \varphi_n \leq 0 \) and belongs to
  \( \Lpmu{0}{\barRR} \), we have that \( \varphi_n \in \Lpmu[\oplus]{1}{\barRR}
  \). In the same way, we obtain that \( \varphi \in \Lpmu[\oplus]{1}{\barRR}\).
  By the $+$-additivity property of the integral~$\int$
  on \( \Lpmu[\oplus]{1}{\barRR}\), we calculate the first and last terms of the
  above equality, and we obtain
  \[
    \inf_{n} \int \np{ \varphi_n + \fonctionun_1 } d\mu
    = \int \np{ \varphi + \fonctionun_1} d\mu \eqfinp
  \]
  We obtain~\eqref{eq:extended_monotone_convergence_theorem}
  because \( \varphi_n + \fonctionun_1=
  \fonctionun_n + \np{-\fonctionun_1} + \fonctionun_1=
  \fonctionun_n \) since \( \fonctionun_1 \) takes values in~\( \RR \),
  % \( \fonctionun_1 \in \RR \),
  and, in the same way,
  \( \varphi + \fonctionun_1=
  \fonctionun + \np{-\fonctionun_1} + \fonctionun_1
  =\fonctionun\).

\end{proof}

The classical \emph{vector space of integrable functions} is
\begin{equation}
  \Lpmu{1}{\barRR} =
  \Lpmu[\oplus]{1}{\barRR} \cap
  \Lpmu[\ominus]{1}{\barRR}
  \eqfinv
\end{equation}
with the property that \( \fonctionun\in \Lpmu{1}{\barRR}\)\( \implies
-\infty < \fonctionun < +\infty \), that is,
\( \Lpmu{1}{\barRR} = \Lpmu{1}{\RR} \).

\subsection{Outer integral on $\Lpmu{0}{\barRR}$}

We follow~\cite{Be.Sh1996} for the following definitions.

\begin{definition}
  \label{def:Le-outer-L0}
  We define the \emph{outer integral} of a function by
  \begin{subequations}
    \begin{align}
      \int^{*}_{\Omega} \fonctionun \dd{\mu}
      & = \inf \Bset{ \int_{\Omega} \psi \dd{\mu}}
        {\psi \in \Lpmu{1}{\RR} \text{ and }  \fonctionun \le \psi }
        \eqsepv \forall \fonctionun \in \barRR^{\Omega}
        \eqfinv
        \label{eq:Le-outer-L0}
        \intertext{and the \emph{inner integral} by}
        \int_{*}^{\Omega} \fonctionun \dd{\mu}
      & = \sup \Bset{ \int_{\Omega} \psi \dd{\mu}}
        {\psi \in \Lpmu{1}{\RR} \text{ and }  \fonctionun \ge \psi }
        \eqsepv \forall \fonctionun \in \barRR^{\Omega}
        \eqfinv
    \end{align}
  \end{subequations}
  where $\int_{\Omega} \psi \dd{\mu}$ is the classical Lebesgue integral for $\psi \in \Lpmu{1}{\RR}$.
\end{definition}
It is straightforward that
\begin{subequations}
  \begin{align}
    \int_{*}^{\Omega} \fonctionun \dd{\mu}
    & \leq \int^{*}_{\Omega} \fonctionun \dd{\mu}
      \eqsepv \forall \fonctionun \in \barRR^{\Omega}
      \eqfinv
    \\
    - \int^{*}_{\Omega} \fonctionun \dd{\mu}
    & \leq
      \int^{*}_{\Omega} \np{-\fonctionun} d\mu
      \eqsepv
      \forall \fonctionun \in \barRR^{\Omega}
      \eqfinv
    \\
    \int_{*}^{\Omega} \fonctionun \dd{\mu}
    & =
      - \bp{ \int^{*}_{\Omega} \np{-\fonctionun} \dd{\mu} }
      \eqsepv \forall \fonctionun \in \barRR^{\Omega}
      \eqfinp
      \label{eq:Le-outer=minusminusLe-inner}
  \end{align}
\end{subequations}

% We introduce the outer integral on the set~$\Lpmu{0}{\barRR}$ as follows.
% \begin{definition}[outer integral on $\Lpmu{0}{\barRR}$]
%   \label{def:Le-outer-L0}
%   The outer integral of a function $\fonctionun \in \Lpmu{0}{\barRR}$ is defined by
%   \begin{align}
%     \int^{*}_{\Omega} \fonctionun \dd{\mu}
%     & = \inf \Bset{ \int_{\Omega} \psi \dd{\mu}}
%     {\psi \in \Lpmu{1}{\RR} \text{ and }  \fonctionun \le \psi \text{ \Pps[\mu]} }
%     \eqfinv
%     \label{eq:Le-outer-L0}
%   \end{align}
%   where $\int_{\Omega} \psi \dd{\mu}$ is the classical Lebesgue integral for $\psi \in \Lpmu{1}{\RR}$.
% \end{definition}

These outer and inner integrals extend
% This outer integral extends
the classical Lebesgue integral to the uncovered case where both
$\int_{\Omega} \fonctionun_{+} \dd{\mu}$ and $\int_{\Omega} \fonctionun_{-} \dd{\mu}$ equal $+\infty$ as shown in the
following Proposition.

\begin{proposition}
  \label{outerIsExtendedL}
  We have that
  \begin{subequations}
    \begin{align}
      \int^{*}_{\Omega} \fonctionun \dd{\mu}
      & =
        \int_{\Omega} \fonctionun_{+} \dd{\mu}
        \UppPlus\Bp{ - \int_{\Omega} \fonctionun_{-} \dd{\mu}}
        \eqsepv
        \forall \fonctionun \in \Lpmu{0}{\barRR}
        \eqfinv
        \label{eq:outer-as-leb-diff}
      \\
      \int_{*}^{\Omega} \fonctionun \dd{\mu}
      & =
        \int_{\Omega} \fonctionun_{+} \dd{\mu}
        \LowPlus\Bp{ - \int_{\Omega} \fonctionun_{-} \dd{\mu}}
        \eqsepv
        \forall \fonctionun \in \Lpmu{0}{\barRR}
        \eqfinp
        \label{eq:inner-as-leb-diff}
    \end{align}
  \end{subequations}
  % \begin{equation}
  %   \int^{*}_{\Omega} \fonctionun \dd{\mu} =
  %   \int_{\Omega} \fonctionun_{+} \dd{\mu}
  %   \UppPlus\Bp{ - \int_{\Omega} \fonctionun_{-} \dd{\mu}}
  %   \eqsepv
  %   \forall \fonctionun \in \Lpmu{0}{\barRR}
  %   \eqfinv
  %   \label{eq:outer-as-leb-diff}
  % \end{equation}
  % where the outer integral of $\fonctionun$ is defined in Equation~\eqref{eq:Le-outer-L0}.
  As a consequence, the outer integral of $\fonctionun$ coincides with the
  extended Lebesgue integral~\eqref{eq:semi-integrable_Lebesgue_integral}  on \( \Lpmu[\oplus]{1}{\barRR} \cup
  \Lpmu[\ominus]{1}{\barRR} \), that is, when $\fonctionun$ is semi-integrable.
\end{proposition}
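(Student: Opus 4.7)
My plan is to establish~\eqref{eq:outer-as-leb-diff} first, then deduce~\eqref{eq:inner-as-leb-diff} by the duality~\eqref{eq:Le-outer=minusminusLe-inner}, and finally observe that when $\fonctionun$ is semi-integrable, at most one of $\int \fonctionun_{+}\dd\mu$, $\int \fonctionun_{-}\dd\mu$ equals $+\infty$, so the right-hand side of~\eqref{eq:outer-as-leb-diff} collapses to the extended Lebesgue integral formula~\eqref{eq:semi-integrable_Lebesgue_integral}.

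For~\eqref{eq:outer-as-leb-diff}, I would split into three cases according to the finiteness of $\int \fonctionun_{+}\dd\mu$ and $\int \fonctionun_{-}\dd\mu$. In Case~A, $\int \fonctionun_{+}\dd\mu=+\infty$: then no $\psi\in\Lpmu{1}{\RR}$ can satisfy $\fonctionun\leq\psi$, for otherwise $\fonctionun_{+}\leq\psi_{+}$ would force $\int\psi_{+}\dd\mu=+\infty$, contradicting $\psi\in\Lpmu{1}{\RR}$; the infimum over an empty set is $+\infty$, which matches the right-hand side since $+\infty$ is absorbant for $\UppPlus$. In Case~C, $\int \fonctionun_{+}\dd\mu<+\infty$ and $\int \fonctionun_{-}\dd\mu<+\infty$: the function $\fonctionun$ is finite $\mu$-a.e., hence $\mu$-equivalent to some $\tilde\fonctionun\in\Lpmu{1}{\RR}$, and monotonicity of the classical Lebesgue integral on $\Lpmu{1}{\RR}$ gives the infimum as $\int\tilde\fonctionun\dd\mu=\int\fonctionun_{+}\dd\mu-\int\fonctionun_{-}\dd\mu$.

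Case~B, where $\int \fonctionun_{+}\dd\mu<+\infty$ and $\int \fonctionun_{-}\dd\mu=+\infty$, is the delicate one and the main obstacle. I need to show the infimum equals $-\infty$, matching the right-hand side (which, by the convention that only $+\infty$ is absorbant for $\UppPlus$, equals a finite number plus $-\infty$, i.e., $-\infty$). The plan is to exploit $\sigma$-finiteness: fix $\Omega_{k}\uparrow\Omega$ with $\mu(\Omega_{k})<+\infty$, and for $k,N\in\NN$ define
\[
  \psi_{k,N} = \fonctionun_{+}\cdot\mathbf{1}_{\{\fonctionun_{+}<+\infty\}} - \min\np{\fonctionun_{-},N}\cdot\mathbf{1}_{\Omega_{k}}.
\]
Each $\psi_{k,N}$ is real-valued $\mu$-a.e., satisfies $\psi_{k,N}\geq\fonctionun$ $\mu$-a.e. (since on $\{\fonctionun_{-}>0\}$ we have $\fonctionun_{+}=0$, and $-\min(\fonctionun_{-},N)\mathbf{1}_{\Omega_{k}}\geq-\fonctionun_{-}=\fonctionun$), and is in $\Lpmu{1}{\RR}$ because $|\psi_{k,N}|\leq \fonctionun_{+}+N\mathbf{1}_{\Omega_{k}}$ has finite integral. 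Then $\int\psi_{k,N}\dd\mu=\int \fonctionun_{+}\dd\mu-\int\min(\fonctionun_{-},N)\mathbf{1}_{\Omega_{k}}\dd\mu$, and the monotone convergence theorem on $\Lpmu[+]{0}{\barRR}$ (applied along $k,N\to+\infty$) drives the subtracted term to $\int \fonctionun_{-}\dd\mu=+\infty$, so $\int\psi_{k,N}\dd\mu\to-\infty$, giving $\int^{*}_{\Omega}\fonctionun\dd\mu=-\infty$.

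Once~\eqref{eq:outer-as-leb-diff} is proved, \eqref{eq:inner-as-leb-diff} follows from~\eqref{eq:Le-outer=minusminusLe-inner} together with $(-\fonctionun)_{+}=\fonctionun_{-}$ and $(-\fonctionun)_{-}=\fonctionun_{+}$, using the elementary identity $-(a\UppPlus b)=(-a)\LowPlus(-b)$ on $\barRR$ that interchanges the two extended additions under negation. Finally, the consequence is immediate: if $\fonctionun\in\Lpmu[\oplus]{1}{\barRR}$ then $\int\fonctionun_{+}\dd\mu<+\infty$, so neither term in the right-hand side of~\eqref{eq:outer-as-leb-diff} involves $+\infty$ as an absorbing element and $\UppPlus$ agrees with the ordinary extended sum $+$; symmetrically for $\fonctionun\in\Lpmu[\ominus]{1}{\barRR}$. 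In both cases the right-hand side reduces to $\int\fonctionun_{+}\dd\mu+\bp{-\int\fonctionun_{-}\dd\mu}$, which is precisely~\eqref{eq:semi-integrable_Lebesgue_integral}.
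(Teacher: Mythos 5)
Your proposal is correct and follows essentially the same route as the paper: a case analysis on the finiteness of $\int_{\Omega}\fonctionun_{+}\dd{\mu}$ and $\int_{\Omega}\fonctionun_{-}\dd{\mu}$, with the competitor set shown to be empty when $\int_{\Omega}\fonctionun_{+}\dd{\mu}=+\infty$, a truncation of $\fonctionun_{-}$ over a $\sigma$-finite exhaustion of $\Omega$ in the mixed case, and the duality~\eqref{eq:Le-outer=minusminusLe-inner} to obtain the inner-integral formula. Your merging of the paper's last two cases via the single implication $\fonctionun\le\psi\Rightarrow\fonctionun_{+}\le\psi_{+}$, and your observation that when $\fonctionun\in\Lpmu{1}{\RR}$ the infimum is attained at a real-valued representative (avoiding the paper's $\epsilon$-argument), are minor streamlinings rather than a different approach.
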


\begin{proof}
  % \noindent $\bullet$
  % Second, the Lebesgue integral for
  % $\fonctionun \in \Lpmu{0}{\barRR}$ is defined by
  % \begin{align}
  %   \int \fonctionun \dd{\mu}
  %   & =   \int \fonctionun_{+} \dd{\mu} - \int \fonctionun_{-} \dd{\mu}\eqsepv
  %   \label{eq:quasi-integrable}
  % \end{align}
  % for a subset of $\Lpmu{0}{\barRR}$, named the \emph{semi-integrable} functions, which
  % consists of function for which the right hand side of
  % Equation~\eqref{eq:quasi-integrable} is properly defined. That is when the two
  % Lebesgue integrals $\int \fonctionun_{+} \dd{\mu}$ and $\int \fonctionun_{-} \dd{\mu}$ are
  % not \emph{both} equal to $+\infty$. When both integrals are finite the
  % function $\fonctionun$ is said to be integrable, the set of such functions being denoted by $\Lpmu{1}{\RR}$.

  % \noindent $\bullet$ Now, assuming that
  % \noindent $\bullet$
  We consider $\fonctionun \in \Lpmu{0}{\barRR}$ and
  % semi-integrable, and we prove that
  % \begin{equation}
  %   \label{lebesgue-rev}
  %   \int_{\Omega} \fonctionun \dd{\mu} =
  %   \inf \Bset{ \int_{\Omega} \psi \dd{\mu}}{\psi \in \Lpmu{1}{\RR} \text{ and }  \fonctionun \le \psi \text{ \Pps[\mu]} }\eqfinp
  % \end{equation}
  % % Thus, for semi-integrable functions the Lebesgue integral defined by Equations~\eqref{eq:sup-positive-simple}
  % % and~\eqref{eq:quasi-integrable} coincide with the outer integral defined by Equation~\eqref{eq:Le-outer-L0}.
  % % We prove this fact considering the three possible cases.
  % We
  we  examine four possible cases in order to
  prove Equation~\eqref{eq:outer-as-leb-diff}
  (then Equation~\eqref{eq:inner-as-leb-diff} is obtained from~\eqref{eq:Le-outer=minusminusLe-inner}).
  \medskip

  \noindent $\bullet$
  Suppose that $\int_{\Omega} \fonctionun_{+} \dd{\mu} < +\infty$
  and $\int_{\Omega} \fonctionun_{-} \dd{\mu} < +\infty$
  (that is, $\fonctionun \in \Lpmu{1}{\RR}$). Then
  % by~\cite[Proposition   II-3-3]{Ne1970}]
  we have that $\mu\nc{\na{\fonctionun = \pm \infty}}=0$, and thus
  there exists a representant $\tilde{\fonctionun} \in \Lpmu{1}{\RR}$ in the
  class, which is equal to $\fonctionun$  (\Pps[\mu]).  Thus, we have that
  $\int^{*}_{\Omega} \fonctionun \dd{\mu} \le \int_{\Omega} \tilde{\fonctionun} \dd{\mu} =
  \int_{\Omega} \fonctionun \dd{\mu}$ as we can use $\psi = \tilde{\fonctionun}$ in the
  definition of the outer integral.  Now, in order to prove the reverse inequality $\int_{\Omega} \fonctionun \dd{\mu} \leq \int^{*}_{\Omega} \fonctionun \dd{\mu}$, we have to consider two cases, depending whether
  $\int^{*}_{\Omega} \fonctionun \dd{\mu}$ is finite or is equal to $-\infty$.
  \smallskip

  \noindent $\diamond$
  In the case where        $\int^{*}_{\Omega} \fonctionun \dd{\mu}$ is finite, we fix $\epsilon >0$. Using
  Equation~\eqref{eq:Le-outer-L0}, there exists $\psi_{\epsilon} \in \Lpmu{1}{\RR}$ such
  that $\fonctionun \le \psi_{\epsilon}$ and
  $\int_{\Omega} \psi_{\epsilon} \dd{\mu} \le \int^{*}_{\Omega} \fonctionun
  \dd{\mu} + \epsilon$. Using the fact that $\fonctionun \in
  \Lpmu{1}{\RR}$ %is integrable
  and the monotonicity of the Lebesgue integral, we obtain
  \[
    \int_{\Omega} \fonctionun \dd{\mu} \le \int_{\Omega}
    \psi_{\epsilon}\dd{\mu}
    \le \int^{*}_{\Omega} \fonctionun \dd{\mu} + \epsilon
    \eqfinv
  \]
  which finally gives
  $\int_{\Omega} \fonctionun \dd{\mu} \le \int^{*}_{\Omega} \fonctionun \dd{\mu}$ and therefore the equality
  $\int_{\Omega} \fonctionun \dd{\mu}= \int^{*}_{\Omega} \fonctionun \dd{\mu}$.
  Equation~\eqref{eq:outer-as-leb-diff} follows using
  Equation~\eqref{eq:semi-integrable_Lebesgue_integral} as we have
  \[
    \int^{*}_{\Omega} \fonctionun \dd{\mu} =
    \int_{\Omega} \fonctionun \dd{\mu} = \int_{\Omega} \fonctionun_+
    \dd{\mu}
    + \np{ - \int_{\Omega} \fonctionun_- \dd{\mu} }
    = \int_{\Omega} \fonctionun_+ \dd{\mu} \UppPlus \bp{- \int_{\Omega} \fonctionun_-  \dd{\mu}}
    \eqfinp
  \]
  \smallskip

  \noindent $\diamond$
  In the case where   $\int^{*}_{\Omega} \fonctionun \dd{\mu}= -\infty$, then using Equation~\eqref{eq:Le-outer-L0}
  there exists a sequence $\ba{\psi_{n}}_{n\in \NN}$
  in $\Lpmu{1}{\RR}$ such that $\fonctionun \le \psi_n$ and $\int_{\Omega} \psi_{n}\dd{\mu} \le -n$ for all
  $n\in \NN$. This implies that $\int_{\Omega} \fonctionun \dd{\mu} =
  -\infty$, which contradicts the fact that $\fonctionun \in \Lpmu{1}{\RR}$.
  \medskip

  \noindent $\bullet$
  Suppose that $\int_{\Omega} \fonctionun_{+} \dd{\mu} < +\infty$
  and $\int_{\Omega} \fonctionun_{-} \dd{\mu} = +\infty$.
  Using the fact that $\fonctionun \le \fonctionun_{+}$, we get that
  $\int^{*}_{\Omega} \fonctionun \dd{\mu} \le \int_{\Omega} \fonctionun_{+} \dd{\mu}$
  as we can use $\psi = \fonctionun_{+} \in \Lpmu{1}{\RR}$ in the
  definition~\eqref{eq:Le-outer-L0} of the outer integral.
  Moreover, as $\int_{\Omega} \fonctionun_{-} \dd{\mu} = +\infty$,
  we can find a sequence  $\na{\psi_n}_{n\in \NN}$ of nonnegative functions such that $\psi_n \in \Lpmu{1}{\RR}$,
  $\psi_n \le \fonctionun_{-}$ and such that $\lim_{n\to \infty} \int_{\Omega} \psi_{n} \dd{\mu} = +\infty$
  for all $n\in \NN$ (take $\psi_n = \1_{\Omega_n} \min\np{n,\fonctionun_{-}}$, where $\np{\Omega_n}_{n \in \NN}$ is a
  monotone sequence of $\tribu{F}$-measurable subsets of~$\Omega$ covering $\Omega$ such that $\mu(\Omega_n) < +\infty$
  which exists by $\sigma$-finite property). Using the fact that $\int_{\Omega} \fonctionun_{+} \dd{\mu} < +\infty$,
  we can find $\tilde{\fonctionun} \in \Lpmu{1}{\RR}$ such that $\fonctionun_{+}= \tilde{\fonctionun}$ \Pps[\mu]
  Thus, for all $n\in \NN$, we have that $\fonctionun \le \np{\tilde{\fonctionun} - \psi_n}$ and
  $\np{\tilde{\fonctionun} - \psi_n} \in \Lpmu{1}{\RR}$. We obtain,
  using monotonicity and monotone convergence that
  \[
    \int^{*}_{\Omega} \fonctionun \dd{\mu} \le
    \int_{\Omega} \np{ \tilde{\fonctionun} - \psi_n } \dd{\mu}
    = \int_{\Omega} \fonctionun_{+}\dd{\mu} - \int_{\Omega} \psi_n \dd{\mu}
    \mathop{\to}_{n \to +\infty} - \infty
    \eqfinp
  \]
  We therefore obtain~Equation~\eqref{eq:outer-as-leb-diff} since both members of the equality are equal to
  $-\infty$.

  % \noindent $\diamond$ (second version) Assume that $\int_\Omega \fonctionun_+\dd{\mu} \in \RR$ and
  % $\int_\Omega \fonctionun_{-}\dd{\mu} = + \infty$. Let $\rho \in \Lpmu[\oplus]{1}{\RR}$ be a
  % given positive integrable function and consider the sequence
  % $\na{\psi_n}_{n\in \NN}$ defined by $\psi_n = \fonctionun_{+} - \inf \np{\fonctionun_{-}, n\rho}$ for all $n\in \NN$.
  % Then, for all $n\in \NN$, we have that
  % $\psi_n \in \Lpmu{1}{\RR}$ and $\fonctionun \le \psi_n$. Thus, we obtain that
  % \begin{align*}
  %   \inf & \Bset{ \int_{\Omega} \psi \dd{\mu}}{\psi \in \Lpmu{1}{\RR} \text{ and }  \fonctionun \le \psi \text{ \Pps[\mu]} }
  %   \\
  %   &\hspace{1cm}\le \inf_{n\in \NN} \int_{\Omega} \psi_n \dd{\mu}
  %   \tag{as $\nset{\psi_n}{n\in \NN}\subset \nset{\psi \in \Lpmu{1}{\RR}}{ \fonctionun\le \psi}$}
  %   \\
  %   &\hspace{1cm}= \int_{\Omega} \fonctionun_{+} \dd{\mu} - \sup_{n\in \NN} \int_{\Omega}\inf(\fonctionun_{-}, n\rho) \dd{\mu}
  %   \\
  %   &\hspace{1cm}= \int_{\Omega} \fonctionun_{+} \dd{\mu} - \int_{\Omega} \sup_{n\in \NN} \inf(\fonctionun_{-}, n\rho) \dd{\mu}
  %   \tag{monotone convergence in \cite[Proposition II-3-3]{Ne1970}]}
  %   \\
  %   &\hspace{1cm}=  \int_{\Omega} \fonctionun_{+}\dd{\mu} - \int_{\Omega} \fonctionun_{-} \dd{\mu} = -\infty
  %   \eqfinp
  % \end{align*}
  % We therefore obtain Equation~\eqref{lebesgue-rev}.
  \medskip

  \noindent $\bullet$
  Suppose that $\int_{\Omega} \fonctionun_{+} \dd{\mu} = +\infty$
  and $\int_{\Omega} \fonctionun_{-} \dd{\mu} < +\infty$. Then we prove that
  $$\nset{\psi \in \Lpmu{1}{\RR}}{\fonctionun \le \psi \text{
      \Pps[\mu]}}=\emptyset \eqfinp$$ Indeed, assuming the existence of
  $\psi \in \Lpmu{1}{\RR}$ such that $\fonctionun \le \psi$, we would obtain that
  $\fonctionun_+ \le \psi + \fonctionun_{-}$ which, using the fact that
  $\psi + \fonctionun_{-} \in \Lpmu{1}{\RR}$, would imply that
  $\int_{\Omega} \fonctionun_{+} \dd{\mu} < +\infty$, hence
  % $\fonctionun_+ \in   \Lpmu{1}{\RR}$,
  contradicting the assumption that
  $\int_{\Omega} \fonctionun_{+} \dd{\mu} = +\infty$.
  \medskip

  \noindent $\bullet$
  Suppose that $\int_{\Omega} \fonctionun_{+} \dd{\mu} = +\infty$
  and $\int_{\Omega} \fonctionun_{-} \dd{\mu} = +\infty$. Using the definition
  of~ $\UppPlus$, we get that the right hand side of Equation~\eqref{eq:outer-as-leb-diff} is equal to $+\infty$.
  Now, we show that Equation~\eqref{eq:outer-as-leb-diff} holds true by proving that
  the set of functions $\psi \in \Lpmu{1}{\RR}$ such that $\fonctionun \le \psi$
  is empty. We proceed by contradiction.
  Assuming the existence of $\psi \in \Lpmu{1}{\RR}$ such that $\fonctionun \le
  \psi$, we would have
  \[
    +\infty=  \int_{\Omega} \fonctionun_{+} \dd{\mu}
    = \int_{\Omega} \fonctionun\1_{\fonctionun\ge 0} \dd{\mu}
    \le \int_{\Omega} \psi\1_{\fonctionun\ge 0} \dd{\mu}
    \le  \int_{\Omega}\psi \dd{\mu}
    \eqfinv
  \]
  contradicting the assumption that $\psi \in \Lpmu{1}{\RR}$.
  Therefore, in Equation~\eqref{eq:Le-outer-L0}
  we obtain that
  $\int^{*}_{\Omega} \fonctionun \dd{\mu} = +\infty$
  and thus equality is ensured in Equation~\eqref{eq:outer-as-leb-diff}.
  \medskip

  This ends the proof.
\end{proof}

\bibliographystyle{abbrv} % smfalpha}
\bibliography{biblio_InversionInfEsperance}

\end{document}